\documentclass[a4paper,english,cleveref, autoref]{lipics-v2021}
\usepackage{amsmath, amsfonts,amsthm, amssymb}
\usepackage{mathrsfs}
\usepackage[utf8]{inputenc}
\newbox\myboxa
\newbox\myboxb
\def\clap#1{\hbox to 0pt{\hss#1\hss}}

\def\mathclap{\mathpalette\mathclapinternal}

\def\mathclapinternal#1#2{%
           \clap{$\mathsurround=0pt#1{#2}$}}

\newtheorem{problem}[theorem]{Problem}

\newcommand{\N}{\mathbb{N}}

\newcommand{\adm}{\mathrm{adm}}
\newcommand{\col}{\mathrm{scol}}
\newcommand{\wcol}{\mathrm{wcol}}

\DeclareMathOperator{\tww}{\rm{tww}}

\DeclareMathOperator{\bomega}{\rm{b}\omega}
\DeclareMathOperator{\Wreach}{{\rm WReach}}
\DeclareMathOperator{\Sreach}{{\rm Sreach}}
\DeclareMathOperator{\birth}{{\rm bt}}
\DeclareMathOperator{\spl}{{\rm st}}
\newenvironment{clproof}{ \trivlist
  \item[\hskip\labelsep
        \emph{Proof of the claim}.]\ignorespaces
}{\hfill$\vartriangleleft$\medskip

}

\title{Twin-width and generalized coloring numbers}

\author{Jan Dreier}{Vienna University of Technology}{dreier@ac.tuwien.ac.at}{}{}%
\author{Jakub Gajarsky}{University of Warsaw}{gajarsky@mimuw.edu.pl}{}{}
\author{Yiting Jiang}{Universit\'e de Paris, CNRS, IRIF, F-75006, Paris, France and Department of Mathematics, Zhejiang Normal University, China}{yjiang@irif.fr}{}{}%
\author{Patrice Ossona de Mendez}{Centre d'Analyse et de Math\'ematiques Sociales (CNRS, UMR 8557), Paris, France \and Computer Science Institute of Charles University, Praha, Czech Republic}{pom@ehess.fr}{https://orcid.org/0000-0003-0724-3729}{}%
\author{Jean-Florent Raymond}{CNRS, LIMOS, Université Clermont Auvergne, France}{j-florent.raymond@uca.fr}{https://orcid.org/0000-0003-4646-7602}{}
\authorrunning{J.\ Dreier, J.~Gajarsky, Y.\ Jiang, P.\ Ossona de Mendez, and J.-F.\ Raymond}
\Copyright{Jan Dreier, Jakub Gajarsky, Yiting Jiang, Patrice Ossona de Mendez, and Jean-Florent Raymond}

\begin{CCSXML}
<ccs2012>
<concept_id>10002950.10003624.10003633</concept_id>
<concept_desc>Mathematics of computing~Graph theory</concept_desc>
<concept_significance>500</concept_significance>
</concept>
</ccs2012>
\end{CCSXML}

\ccsdesc[500]{Mathematics of computing~Graph theory}

\keywords{Twin-width, generalized coloring numbers} 

\category{} 

\relatedversion{} 

\supplement{}

\newcommand{\ERCagreement}{{\begin{minipage}{.56\textwidth}This paper is part of a project that has received funding from the European Research Council (ERC) under the European Union's Horizon 2020 research and innovation programme (grant agreement No 810115 -- {\sc Dynasnet}). \end{minipage}\hfill\begin{minipage}{.33\textwidth}\includegraphics[width=\textwidth]{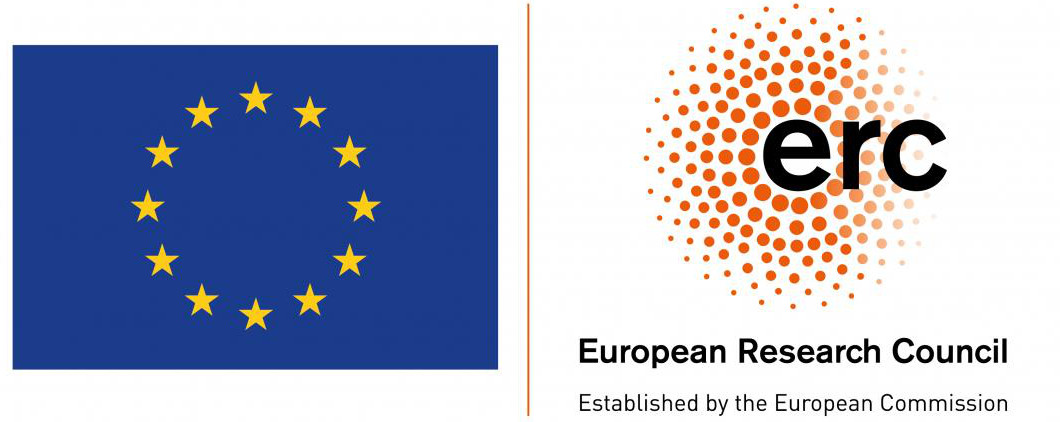}\end{minipage}\hfill}}

\funding{\ERCagreement}
\acknowledgements{}

\nolinenumbers 

\hideLIPIcs  

\EventEditors{John Q. Open and Joan R. Access}
\EventNoEds{2}
\EventLongTitle{42nd Conference on Very Important Topics (CVIT 2016)}
\EventShortTitle{CVIT 2016}
\EventAcronym{CVIT}
\EventYear{2016}
\EventDate{December 24--27, 2016}
\EventLocation{Little Whinging, United Kingdom}
\EventLogo{}
\SeriesVolume{42}
\ArticleNo{23}

\begin{document}

\maketitle

\begin{abstract}
    In this paper, we prove that a graph $G$ with no $K_{s,s}$-subgraph and twin-width $d$ has $r$-admissibility and $r$-coloring numbers bounded from above by an exponential function of $r$ and that we can construct graphs achieving such a dependency in $r$.
\end{abstract}

\section{Introduction}
In this paper we consider the twin-width graph parameter, defined by
Bonnet, Kim, Thomass\'e and Watrigant~\cite{twin-width1}
as a generalization of a width invariant for classes of permutations
defined by Guillemot and Marx~\cite{Guillemot14}. This parameter was
intensively studied recently in the context of many structural and
algorithmic questions such as FPT model checking~\cite{twin-width1},
graph enumeration~\cite{twin-width2}, graph
coloring~\cite{twin-width3}, matrices and ordered
graphs~\cite{twin-width4}, and transductions of permutations~\cite{TWWP-arxiv}. (We postpone the formal definition of twin-width to \Cref{sec:tww}.)

It is known that a graph class with bounded twin-width excludes some biclique as a subgraph if and only if it has bounded expansion~\cite{twin-width2}.
 Recall that a class $\mathscr C$ has \emph{bounded expansion} if, for each integer $r$ the class of all the minors of graphs of $\mathscr C$ obtained by contracting vertex disjoint connected subgraphs with radius at most $r$ and deleting some edges and vertices have bounded average degree, which may depend on $r$.  
(We refer the interested reader to~\cite{Sparsity} for an in-depth study of classes with bounded expansion.) Among the numerous characterizations of classes with bounded expansion, three relate to the 
\emph{generalized colouring numbers} $\wcol_r$ and $\col_r$ introduced by Kierstead and Yang~\cite{Kierstead2003} and to the \emph{$r$-admissibility} $\adm_r$ introduced by Dvo\v r\'ak~\cite{Dvovrak2011}. Indeed, as proved by Zhu~\cite{Zhu2008}, the following are equivalent for a class $\mathscr C$:
\begin{enumerate}
    \item $\mathscr C$ has bounded expansion;
    \item $\sup\{\wcol_r(G): G\in\mathscr C\}<\infty$ for every integer $r$;
    \item $\sup\{\col_r(G): G\in\mathscr C\}<\infty$ for every integer $r$.
\end{enumerate}
Moreover, using the inequality $\adm_r(G)\leq \col_r(G)\leq \wcol_r(G)\leq \frac{\adm_r(G)^{r+1}-1}{\adm_r(G)-1}$ (see \cite{Dvovrak2011}), we get yet another equivalent property.
\begin{enumerate}
\setcounter{enumi}{3}
    \item $\sup\{\adm_r(G): G\in\mathscr C\}<\infty$ for every integer $r$.
\end{enumerate}

One can show~\cite{twin-width2} that for every integer $r$ there exists a function $f_r:\mathbb N\times \mathbb N\rightarrow \mathbb N$ such that if $G$ is a graph with twin-width $t$ and no $K_{s,s}$-subgraph, then we have $\wcol_r(G)\leq f_r(t, s)$. Similar bounds also exist for $\col_r$ and $\adm_r$. 
However, the proof given in \cite{twin-width2} that biclique-free graphs with bounded twin-width have bounded expansion does not indicate how to compute such binding functions. 

In this paper, we prove that a graph $G$ with no $K_{s,s}$-subgraph and twin-width $d$ has $\adm_r$, $\col_r$ and $\wcol_r$ bounded from above by an exponential function of $r$, and that we can construct graphs achieving such a dependency in $r$.
In particular, $\col_r(G) \le (d^r+3)s$~(\Cref{thm:upperbound}).
On the other hand, one can choose $G$ such that $\col_r(G) \ge (\frac{d-4}{8})^rs$~(\Cref{cor:lowerboundscol}).

\section{Definitions and Notations}
\subsection{Twin-width}
\label{sec:tww}

We define twin-width with the help of \emph{trigraphs}.
The notion of trigraphs used in this work is slightly different from the notion used in \cite{twin-width1}. 
Both notions are nevertheless equivalent up to isomorphism.
A {\em trigraph} $\mathbf G$ \emph{on a graph} $G=(V,E)$ is a binary structure with two binary relations, the \emph{black adjacency} $E$ and the \emph{red adjacency} $R$, whose domain is a partition of $V$, and whose black and red adjacencies are exclusive (that is: no two elements of $\mathbf G$ can be adjacent in both relations). 
Thus, the elements of $\mathbf G$ are subsets of the vertices of $G$.
To distinguish the elements of $\mathbf G$ from the vertices of $G$, we will call them \emph{nodes} and denote them by capital letters, like $X,Y,Z$. The set of nodes of $\mathbf{G}$ is denoted by $V(\mathbf{G})$.
The elements of  $E(\mathbf G)$ and $R(\mathbf G)$ are respectively called \emph{black edges} and \emph{red edges}. 
The {\em set of neighbours} $N_{\mathbf G}(X)$ of a node $X$ in a trigraph $\mathbf G$ consists of all the nodes adjacent to $X$ by a black or red edge; the {\em set of $E$-neighbours} $N_{\mathbf G}^E(X)$ consists of all nodes adjacent to $X$ by a black edge and the {\em set of $R$-neighbours} $N_{\mathbf G}^R(X)$ consists of all nodes adjacent to $X$ by a red edge. A {\em $d$-trigraph} is a trigraph $\mathbf G$ with maximum red degree at most $d$, i.e., $|N_{\mathbf G}^R(X)|\leq d$ for all $X\in V(\mathbf G)$.
 
 Let $\mathbf G$ be a trigraph on a graph $G$ and let $X$ and $Y$ be (non-necessarily adjacent) nodes of $\mathbf G$. 
We say a trigraph $\mathbf G'$ on $G$ is obtained from $\mathbf G$ by {\em contracting} $X$ and $Y$ if $V(\mathbf G')=V(\mathbf G)\setminus\{X,Y\}\cup\{X\cup Y\}$, 
$N_{\mathbf G'}(X\cup Y)=N_{\mathbf G}(X)\cup N_{\mathbf G}(Y)$, $N_{\mathbf G'}^E(X\cup Y)=N_{\mathbf G}^E(X)\cap N_{\mathbf G}^E(Y)$ (and  $N^R_{\mathbf G'}(X\cup Y)=N_{\mathbf G'}(X\cup Y)\setminus N^E_{\mathbf G'}(X\cup Y)$), and the red and black adjacencies between all other nodes of $\mathbf G'$ are as in $\mathbf G$.

A {\em $d$-contraction sequence} of a graph $G=(V,E)$ with $n$ vertices is a sequence $\mathbf G_{n},\dots,\mathbf G_1$ of $d$-trigraphs on $G$, where $\mathbf G_{n}$ is the trigraph isomorphic to $G$ defined by
	$V(\mathbf G_n)=\{\{v\}:v\in V\}$, $E(\mathbf G_n)=\{(\{u\},\{v\}): (u,v)\in E(G)\}$, and $R(\mathbf G_n)=\emptyset$, $\mathbf G_1$ is the trigraph with single node $V$, and $\mathbf G_{i}$ is obtained from $\mathbf G_{i+1}$ by performing a single contraction. 
The minimum $d$ such that there exists a $d$-contraction sequence of a graph $G$ is the {\em twin-width} of $G$, denoted by $\tww(G)$. 	
For a contraction sequence $\mathbf G_{n},\dots,\mathbf G_1$,
we define the \emph{universe} $\mathcal U=\bigcup_{i=1}^n V(\mathbf G_i)$ to be the union of all node sets.

A given contraction sequence $\mathbf G_n,\dots,\mathbf G_1$ on a graph $G=(V,E)$ (with universe $\mathcal U$) can also be reversed to $\mathbf{G}_1, \dots, \mathbf{G}_n$ and seen as an \emph{uncontraction sequence} where we start with a single node (the trigraph $\mathbf{G}_1$) and a node
$Z$ of $\mathbf G_i$ is split into two nodes $X$ and $Y$ with no edge, black edge or red edge between them in $\mathbf G_{i+1}$.
With this picture in mind, we define for every $X\in\mathcal U$, the \emph{birth time} $\birth(X)$ as the minimum integer $i$ with $X\in V(\mathbf G_i)$ and the  \emph{split time} $\spl(X)$ as the maximum integer $i$ with $X\in V(\mathbf G_i)$. 
Observe that for every $i \in \{1, \dots, n-1\}$, there is a unique $X \in \mathcal{U}$ with $\spl(X) = i$; the subsets $X\in \mathcal{U}$ with $\spl(X)=n$ are  the nodes of $\mathbf G_n$, that is the singletons $\{v\}$ with $v\in V(G)$.
If $X\in\mathcal U\setminus\{V\}$, the \emph{parent} of $X$ is the minimal set $Y\in\mathcal U$ with $Y\supsetneq X$. Conversely, if $|X|>1$, the \emph{children} of $X$ are the two maximal sets $Y$ and $Z$ in $\mathcal U$ with $Y\subsetneq X$ and $Z\subsetneq X$. Note that $\{Y,Z\}$ is a partition of $X$ and that $\birth(Y)=\birth(Z)=\spl(X)+1$. 

\subsection{Generalized Colouring Numbers and Admissibility}
\label{sec:col}
Let $\Pi(G)$ be the set of all linear orders of the vertices of the graph
$G$, and let $L\in\Pi(G)$.
(We denote by $\leq_L$ the corresponding binary relation for better readability.)
Let $u,v\in V(G)$, and let $r$ be a positive integer.

We say that $u$ is
\emph{weakly $r$-reachable} from~$v$ with respect to~$L$, if there exists a
path $P$ of length at most $r$ between $u$ and $v$ such that
$u \le_L w$ for all vertices $w$ of $P$. Let
$\Wreach_r[L,v]$ be the set of vertices that are weakly $r$-reachable
from~$v$ with respect to $L$. Note that $v\in\Wreach_r[L,v]$.

We say that $u$ is \emph{strongly $r$-reachable} from $v$ with respect to~$L$, if
there is a path $P$ of length at most $r$ connecting $u$ and
$v$ such that $u\leq_Lv$ and all inner vertices $w$ of $P$ satisfy
$v<_Lw$. Let $\Sreach_r[L,v]$ be the set of vertices that are strongly
$r$-reachable from~$v$ with respect to $L$. Note that again we have
$v\in \Sreach_r[L,v]$.

The \emph{$r$-backconnectivity} $b_r(L,v)$ of a vertex $v$ is the maximum number of paths of length at most $r$ in $G$ that start in $v$, 
share no other vertices except $v$, and end at vertices that lie before $v$ in the ordering $L$.

The \emph{weak $r$-colouring number
  $\wcol_r(G)$} of $G$ is defined as
\begin{equation*}
  \wcol_r(G):=\min_{L\in\Pi(G)}\:\max_{v\in V(G)}\:
  \bigl|\Wreach_r[L,v]\bigr|,
\end{equation*}
and the \emph{strong $r$-colouring number $\col_r(G)$} of $G$ is defined as
\begin{equation*}
  \col_r(G):=\min_{L\in\Pi(G)}\:\max_{v\in V(G)}\:
  \bigl|\Sreach_r[L,v]\bigr|.
\end{equation*}

The \emph{$r$-admissibility} of $G$ is defined as
\begin{equation*}
    \adm_r(G)=\min_{L\in\Pi(G)}\:\max_{v\in V(G)}\:b_r(L,v).
\end{equation*}

\section{From Strong Colouring to Weak Colouring}

It is known that the weak and strong colouring numbers are related by $\col_r(G)\leq \wcol_r(G)\leq \col_r(G)^r$~\cite{Kierstead2003}.
However it is possible to improve the upper bound in the case where the strong coloring numbers increase at least at an exponential rate.

\begin{lemma}\label{lem:st-to-wk}

For every graph $G$ and every positive integer $r$ we have
\[
\wcol_r(G)\leq 2^{r-1}\max_{1\leq k\leq r} \col_k(G)^{r/k}.
\]
\end{lemma}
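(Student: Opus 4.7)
The plan is to fix an ordering $L$ of $V(G)$, decompose any path witnessing weak $r$-reachability using its running $L$-minima, and count the resulting sequences via compositions of $r$. For $v \in V(G)$ and $u \in \Wreach_r[L,v]$ witnessed by a path $v = v_0, \dots, v_\ell = u$ with $\ell \leq r$ and $u \leq_L v_i$ for every $i$, I extract the indices $0 = j_0 < j_1 < \dots < j_s = \ell$ at which $v_{j_t}$ is the $L$-minimum of $\{v_0, \dots, v_{j_t}\}$. A direct check shows that each sub-path $v_{j_{t-1}}, \dots, v_{j_t}$ witnesses $v_{j_t} \in \Sreach_{k_t}[L, v_{j_{t-1}}]$ for $k_t := j_t - j_{t-1}$: the endpoint $v_{j_t}$ is strictly below $v_{j_{t-1}}$ in $L$ (a new record low), while every intermediate $v_j$ with $j_{t-1} < j < j_t$ is strictly above $v_{j_{t-1}}$ (otherwise the running minimum would have dropped before reaching $j_t$). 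Thus $u$ is determined by the sequence $(v_{j_0}, \dots, v_{j_s})$ of record lows, and $|\Wreach_r[L,v]|$ is bounded by the number of such sequences with $\sum_t k_t \leq r$.

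Setting $c_k(L) := \max_v |\Sreach_k[L, v]|$, this gives $|\Wreach_r[L, v]| \leq \sum_{k_1 + \dots + k_s \leq r} \prod_{t=1}^s c_{k_t}(L)$. The number of compositions of integers up to $r$ is at most $2^{r-1}$, and the weighted-average inequality
\[
\log \prod_t c_{k_t}(L) \;=\; \sum_t k_t \cdot \frac{\log c_{k_t}(L)}{k_t} \;\leq\; r \cdot \max_{1 \leq k \leq r} \frac{\log c_k(L)}{k}
\]
yields $\prod_t c_{k_t}(L) \leq \max_k c_k(L)^{r/k}$ for every composition with $\sum_t k_t = r$. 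Combining these two observations,
\[
|\Wreach_r[L, v]| \;\leq\; 2^{r-1} \cdot \max_{1 \leq k \leq r} c_k(L)^{r/k}.
\]

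For the final step I choose $L$ carefully: let $k^* \in \{1, \dots, r\}$ attain $\max_k \col_k(G)^{r/k}$ (so that $\col_k(G) \leq \col_{k^*}(G)^{k/k^*}$ for every $k$), and take $L$ realizing $c_{k^*}(L) = \col_{k^*}(G)$. I then need to verify that $\max_k c_k(L)^{r/k} \leq \col_{k^*}(G)^{r/k^*}$ for this specific $L$, which is immediate at $k = k^*$ by construction. I expect the main obstacle to be the remaining cases: for $k < k^*$ one uses the inclusion $\Sreach_k \subseteq \Sreach_{k^*}$ together with the extremality of $k^*$, and for $k > k^*$ one iterates strong $k^*$-reachability along the corresponding path; in both cases the goal is $c_k(L) \leq \col_{k^*}(G)^{k/k^*}$, which is tighter than the naive inequality $c_k(L) \leq c_{k^*}(L)$ and is the delicate part of the argument. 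Once this is in place, we conclude $\wcol_r(G) \leq 2^{r-1} \col_{k^*}(G)^{r/k^*} = 2^{r-1} \max_k \col_k(G)^{r/k}$.
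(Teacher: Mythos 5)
Up to its final step your argument is exactly the paper's: cutting a witnessing path at its running $L$-minima (the paper's ``milestones''), indexing the pieces by compositions of $r$, and the observation that, writing $c_k(L)=\max_{v}|\Sreach_k[L,v]|$, each product $\prod_t c_{k_t}(L)$ is at most $\bigl(\max_{1\le k\le r} c_k(L)^{1/k}\bigr)^{\sum_t k_t}$. Everything through the bound $|\Wreach_r[L,v]|\le\sum\prod_t c_{k_t}(L)$ is fine. The genuine gap is the last step, which you yourself flag as the delicate part and do not carry out: you need a single order $L$ with $c_k(L)\le \col_{k^*}(G)^{k/k^*}$ for every $k\le r$, and neither of your sketches delivers it. For $k<k^*$, the inclusion $\Sreach_k[L,v]\subseteq\Sreach_{k^*}[L,v]$ only gives $c_k(L)\le c_{k^*}(L)=\col_{k^*}(G)$, and the extremality of $k^*$ is a statement about $\col_k(G)=\min_{L'}\max_v|\Sreach_k[L',v]|$, so it says nothing about $c_k(L)$ for your particular $L$: an order optimal at radius $k^*$ may be bad at radius $k$. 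With only $c_k(L)\le\col_{k^*}(G)$ per factor, the all-ones composition alone is bounded only by $\col_{k^*}(G)^{r}$, which overshoots the target $\col_{k^*}(G)^{r/k^*}$ whenever $k^*\ge 2$. For $k>k^*$, cutting a path witnessing $u\in\Sreach_k[L,v]$ into blocks of length $k^*$ does not produce $\Sreach_{k^*}$-witnesses, because the interior vertices are only known to lie above $v$, not above the cut vertices. So the concluding sentence rests on an unproven claim.

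Two further remarks. First, the paper does not take this detour through a single radius $k^*$ at all: it fixes one order and bounds the factor for radius $r_i$ directly by $\col_{r_i}(G)$, only afterwards using $\col_{r_i}(G)\le z^{r_i}$ with $z=\max_{1\le k\le r}\col_k(G)^{1/k}$. Your reformulation makes explicit that such a step amounts to having one order that is simultaneously good for all radii (which is what the order constructed in \Cref{thm:upperbound} provides in the application \Cref{cor:wcol}); your attempt to manufacture such an order from optimality at the single radius $k^*$ is precisely where the proof breaks. Second, a smaller slip: you sum over compositions of all totals at most $r$, and there are $2^{r}-1$ of those, not $2^{r-1}$; the paper avoids this by padding the last part to $r_k:=r-\sum_{i<k}r_i$ (legitimate because strong $r_k$-reachability only requires a witness of length at most $r_k$), so that only the $2^{r-1}$ compositions of exactly $r$ occur and the stated constant is obtained.
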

\begin{proof}
Let $r$ be a positive integer and let $L$ be a linear order on $V(G)$ that minimizes
\[
    \max_{v\in V(G)}\: \bigl|\Wreach_r[L,v]\bigr|.
\]
Let $u$ be a vertex of $G$, $v\in \Wreach_r[L, u]$,  and consider a path $P$ certifying that $v$ is weakly $r$-reachable from $u$; in particular $P$ has length at most $r$.
Let $C(r)$ be the set of all \emph{compositions} of $r$, that is of all tuples $(r_1,\dots,r_k)$ with $r_i>0$ (for $1\leq i\leq k$) and $\sum_{1\leq i\leq k}r_i=r$.
A \emph{milestone} of $P$ is a vertex 
$v$ of $P$ such that all the vertices of $P$ from $u$ (included) to $v$ (excluded) are greater than $v$. 
Let $v_1,\dots,v_k=v$ be the milestones of $P$ other than $u$, and let $r_1,\dots,r_{k-1}$ be the lengths of the paths from $v_0=u$ to $v_1$,\dots, $v_{k-2}$ to $v_{k-1}$, and let $r_k=r-\sum_{i=1}^{k-1}r_i$, so that $(r_1,\dots,r_k)\in C(r)$.
The subpath of $P$ from $v_{i-1}$ to $v_i$ witnesses that $v_i$ is strongly $r_i$-reachable from $v_{i-1}$. Note that strong $r_k$-reachability requires the existence of a witness path of length at most $r_k$, hence it is safe to consider $r_k$ instead of the length of the subpath of $P$ linking $v_{k-1}$ and $v_k$. 
 We deduce that
\[
\Wreach_r[L,u]\subseteq\quad\bigcup_{\mathclap{(r_1,\dots,r_k)\in C(r)}}\quad\qquad\bigcup_{v_1\in\Sreach_{r_1}[L,u]}\quad\cdots\quad\qquad\bigcup_{\mathclap{v_{k-1}\in\Sreach_{r_{k-1}}[L,v_{k-2}]}}\quad\Sreach_{r_k}[L,v_{k-1}].
\]
(Note that we actually have equality, the reverse inclusion following from the concatenation of paths witnessing $v_1\in\Sreach_{r_1}[L,v]$, \dots, $u\in\Sreach_{r_k}[L,v_{k-1}]$.)
Thus we have
\[
\wcol_r(G)\leq \sum_{(r_1,\dots,r_k)\in C(r)}\prod_{i=1}^k \col_{r_i}(G).
\]
Let $z=\max_{1\leq k\leq r}\col_{k}(G)^{1/k}$. 
Then $\col_{r_i}(G)\leq z^{r_i}$. Thus
\[
\wcol_r(G)\leq \sum_{(r_1,\dots,r_k)\in C(r)}\prod_{i=1}^k z^{r_i}= |C(r)|\, z^r=2^{r-1}z^r.
\]
\end{proof}

\section{Upper bounds}

Let $\bomega(G)$ denote the maximum integer $s$ such that
$K_{s,s}$ is a subgraph of $G$.
\begin{theorem}\label{thm:upperbound}
	For every graph $G$ and every positive integer $r$ we have 
	\begin{equation}
	\label{eq:ub}
			\col_r(G)\le \biggl(3+\tww(G)\sum_{i=0}^{r-1}(\tww(G)-1)^{i}\biggr)\bomega(G)\le  
 (\tww(G)^r+3)\bomega(G).
 \end{equation}
\end{theorem}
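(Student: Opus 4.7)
The plan is to fix a $d$-contraction sequence of $G$, with $d=\tww(G)$, and use it to define an explicit linear order $L$ on $V(G)$ --- namely, a DFS-order of the leaves of the associated contraction tree $T$, with a suitable rule for ordering children at each internal node. For each $v\in V(G)$, writing $s=\bomega(G)$ and $A_i^v$ for the node of $\mathbf G_i$ containing $v$, I would select a pivotal level $i^{*}=i^{*}(v)$ in the uncontraction sequence --- for instance the least $i$ with $|A_{i}^{v}|\le s$ --- and analyse strong $r$-reachability through the structure of $\mathbf G_{i^{*}}$.

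For any $u\in\Sreach_{r}[L,v]\setminus\{v\}$, the witness path $v=w_{0}w_{1}\cdots w_{\ell}=u$ projects at level $i^{*}$ to a walk of length at most $r$ from $A_{i^{*}}^{v}$ to $A_{i^{*}}^{u}$ in the trigraph $\mathbf G_{i^{*}}$, where consecutive nodes are either equal or joined by a red or black edge. The goal is to map $u$ to a non-backtracking walk of length at most $r$ in the red graph of $\mathbf G_{i^{*}}$ (which has maximum degree at most $d$) starting at $A_{i^{*}}^{v}$, with the map being at most $s$-to-one. The number of such walks is bounded by $1+d\sum_{i=0}^{r-1}(d-1)^{i}$, giving the bulk of the bound; the remaining additive $3s$ absorbs the vertices of $A_{i^{*}}^{v}$ itself and those accessible through a single black-edge excursion.

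The decisive structural input throughout is the consequence of $K_{s+1,s+1}$-freeness for trigraphs in the sequence: a black edge between nodes $X$ and $Y$ in any $\mathbf G_{i}$ forces $\min(|X|,|Y|)\le s$, and moreover the union of vertices across all black-neighbour nodes of any node of size $\ge s+1$ has size at most $s$ (otherwise $G$ would contain $K_{s+1,s+1}$). This controls both the multiplicity $s$ per red walk (each walk endpoint corresponds to at most $s$ choices of $u$) and the additive $3s$ contribution attributable to black edges.

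The main obstacle will be the careful treatment of black edges along the witness path. Unlike red edges, whose count at any node is bounded by the twin-width, black edges can be dense in the trigraph, so one cannot simply treat $\mathbf G_{i^{*}}$ as a bounded-degree graph. The technical heart of the proof will be to argue, via the biclique-free property, that black edges contribute only to a bounded additive term rather than being counted multiplicatively at each step; a related secondary point is to choose the child-ordering in the DFS so that the accounting is valid uniformly across all $v$.
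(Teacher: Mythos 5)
Your skeleton matches the paper's (an order extracted from the contraction sequence, a pivotal level per vertex, projection of witness paths into the trigraph, red-degree counting, and the biclique-freeness facts about black edges), but the one step you explicitly defer --- why black edges cannot occur in the \emph{interior} of a projected witness path --- is precisely the crux, and your choices of order and pivotal level do not deliver it. Black degrees in a trigraph are unbounded, so the argument collapses unless every node of $\mathbf G_{i^*}$ met by the interior of the projected walk (these nodes contain only vertices coming after $v$ in $L$, since interior vertices of a witness path lie after $v$) is guaranteed to have no black edge to another such node. With $L$ a DFS order of the contraction tree and $i^*$ the first level at which the node containing $v$ is small, nothing prevents two small nodes of $\mathbf G_{i^*}$, both consisting of vertices after $v$, from being joined by a black edge; a witness path may then hop along such black edges, and since black adjacency is not degree-bounded, neither the ``at most $s$-to-one onto non-backtracking red walks'' claim nor the ``additive $3s$ for black excursions'' can be justified from the two structural facts you cite.

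The paper resolves this by coupling the order and the pivot to black edges, not just to size. Call a node of $\mathbf G_i$ \emph{nice} if it is small \emph{and} incident to a black edge in $\mathbf G_i$; the maximal nice sets partition $V(G)$, and $L$ orders these blocks by the first time $\rho$ at which they become nice (ties broken by birth time). This is in general \emph{not} a DFS/leaf order of the contraction tree: blocks from different branches must interleave according to their $\rho$ values, so no child-ordering rule alone recovers it. For $v$ in block $N_a$ the pivot is $t=\rho(N_a)-1$. Then every vertex before $v$ lies in a node that is nice at step $t$, every interior vertex of a witness path lies in a non-nice node of $\mathbf G_t$, and three easy claims (a small non-nice node has no incident black edge; no black edge joins two non-nice nodes, by $K_{s+1,s+1}$-freeness; the union of the black neighbourhood of a big node has at most $s$ vertices) force all edges of the projected path, except possibly the last one, to be red. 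Only then does the red-degree-$d$ BFS count give the bound $\bigl(3+d\sum_{i=0}^{r-1}(d-1)^i\bigr)s$. Without this ``time of becoming nice'' device (or an equally strong substitute), your plan stalls exactly at the point you yourself label the technical heart, so as it stands the proposal has a genuine gap.
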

\begin{proof}
	Let $d=\tww(G)$ and $s=\bomega(G)$.
	Without loss of generality, we can assume that $G$ is connected and contains more than $s$ vertices.
	We consider a $d$-uncontraction sequence $\mathbf G_1,\dots,\mathbf G_n$ of $G$ with universe $\mathcal U$. 
    For every $i \in \{1, \dots, n\}$ we say a node of $\mathbf G_i$ is \emph{small} if it contains at most $s$ vertices and it is \emph{big}, otherwise.
	A set $X\in\mathcal U$ is \emph{nice at step} $i$ with $\birth(X) \leq i \leq \spl(X)$ if $X$ is small and some black edge is incident to it in $\mathbf G_{i}$. 
    Note that if $X$ is nice at step $i$, it is nice at step $j$ for all $i\leq j\leq\spl(X)$.
	The set $X$ is \emph{nice} if it is nice at some step (equivalently, at step $\spl(X)$). 
    For every nice set $X$ we define $\rho(X)$ as the minimum $i$ such that $X$ is nice at step~$i$. (Note that $\rho(X)>1$ as $\mathbf G_1$ is edgeless.)
	As $G$ is connected, it is clear that every $X\in\mathcal U$ has a subset $Y\in\mathcal U$ that is nice. Also, if $X,Y\in\mathcal U$, $X\subseteq Y$ and $Y$ is nice, then $X$ is also nice. 
	It follows that the family $\mathcal N$ of all the maximal nice sets in $\mathcal U$ form a partition of $V$. 
	We order the elements of $\mathcal N$ as $N_1,\dots,N_k$ in such a way that for all $i<j$, either $\rho(N_i)<\rho(N_j)$ holds or $\rho(N_i)=\rho(N_j)$ and $\birth(N_i)\ge\birth(N_j)$. 
    We now fix any linear ordering $L$ of $V$ such that for all $v \in N_i$, $w \in N_j$ with $i < j$ holds $v <_L w$.
    See also \Cref{remark} for an equivalent algorithmic way to define the order $L$.
    We will  use this ordering to bound the strong coloring numbers of $G$.

	For $1 \le i \le n$, we define $\mathcal B_i$ to be the set of all nodes of $\mathbf G_i$ that are not nice at step~$i$. We first establish some easy properties of $\mathcal B_i$.
	\begin{claim}
		\label{cl:1}
		No small node in $\mathcal B_i$ is incident to a black edge in $\mathbf G_i$.
	\end{claim}
	\begin{clproof}
		Assume $X\in\mathcal B_i$ is small. Then it is not adjacent to a black edge as it is not nice at step $i$.
	\end{clproof}
	\begin{claim}
		\label{cl:2}
		No two nodes in $\mathcal B_i$ are adjacent in $\mathbf G_i$ by a black edge.
	\end{claim}
	\begin{clproof}
		Assume for contradiction that $X$ and $Y$ are nodes in $\mathcal B_i$ that are adjacent by a black edge in $\mathbf G_i$.
        Hence, $G[X\cup Y]$ includes $K_{|X|,|Y|}$ as a subgraph.
		According to \Cref{cl:1}, both $X$ and $Y$ are big, contradicting the assumption $\bomega(G)=s$.
	\end{clproof}
	\begin{claim}
        \label{cl:3}
  For every node $X \in \mathcal B_i$ holds $|\bigcup N_{\mathbf G_i}^E(X)| \le s$.
	\end{claim}
	\begin{clproof}
		Let $X\in\mathcal B_i$ and let $Y=\bigcup N_{\mathbf G_i}^E(X)$. Then $X$ and $Y$ induce a biclique in $G$ thus $\min(|X|,|Y|)\leq s$. As only big nodes in $\mathcal B_i$ are adjacent to black edges (by \Cref{cl:1}), we deduce $|X| > s$ and therefore  $|Y|\leq s$.
	\end{clproof}

    Let us consider a vertex $v \in V$.
    In the remainder of the proof, we will bound the number of vertices in $G$ that are strongly $r$-reachable from $v$ with respect to~$L$.
    Let $a \in \{1, \dots, k\}$ be such that $v \in N_a$ and let $t=\rho(N_a)-1$. 
    Let $S$ be the unique node of $\mathbf G_t$ with $\spl(S)=t$, and let $Y,Z$ be the two children of $S$.

    Let $\mathcal L=\{X\in V(\mathbf G_t): (\exists i<a), N_i\supseteq X\}$. Then $\mathcal L$ is the set of all nodes of $\mathbf G_t$ that are nice at step $t$. By definition of $L$, all the vertices of $G$ that belong to these nodes appear before $v$ in~$L$. If we set $\mathcal R=V(\mathbf G_t)\setminus\mathcal L$, then $\mathcal{R}=\mathcal B_t$.

    \paragraph*{Case 1: $v\in S$.}
	Note that if a vertex $u \in V(G)$ is strongly $r$-reachable from $v$, then $u$ belongs either to $S$ or to a set in $\mathcal L$.
    We consider a BFS-tree $T$ in $\mathbf G_t$, starting at $S$, following only red edges, with depth $r$, and stopping each time it reaches a node in $\mathcal L$. We further remove from $T$ any node with no descendant (in $T$) belonging to $\mathcal L$. This way we get a tree $T$ rooted at $S$, with internal nodes in $\mathcal R$, with depth at most $r$, and with leaves in $\mathcal L$. Let $\mathcal I$ denote the sets of all internal nodes of $T$ and let $\mathcal E$ be the sets of all leaves of $T$. 
    Then $|\mathcal I|\leq 1+\sum_{\ell=0}^{r-2}d(d-1)^\ell$ and $|\mathcal E|\leq d(d-1)^{r-1}$. 
    Consider any vertex $u$ that is strongly $r$-reachable from $v$, and let $P$ be a path from $v$ to $u$ in $G$ witnessing this.
    We can project $P$ onto $\mathbf G_t$ by mapping every vertex to the node of $\mathbf G_t$ containing it.
    The projection is a walk from $S$ to a node $X_u$ containing $u$. 
    From this walk we extract a path $Q$ of length at most $r$ from $S$ to $X_u$. 
    All the internal nodes of $Q$ as well as $S$ belong to $\mathcal R$, hence all the edges of $Q$ (but maybe the last one) are red (according to \Cref{cl:2}). Moreover, $X_u$ is either $S$ or it belongs to $\mathcal L$.
    So, either $u\in S$, or $X_u$ has been reached by a black edge from some internal node of $T$, or $X_u$ is a leaf of $T$. It is easily checked that at most $|\mathcal I|s$ vertices of $G$ can be of the second type (according to \Cref{cl:3}), and at most $|\mathcal E|s$ are of the last type (as leaves belong to $\mathcal{L}$, so they are small).
    Regarding the first type, we assume without loss of generality that $v \in Z$ and observe that either $u \in Z$, so there are at most $s$ choices for $u$ (as $Z = N_a$ is nice hence small at time $t+1$), or $u\in Y$ but then, as $u \leq_L v$, $Y$ is nice as well at time $t+1$ so $|Y| \leq s$. Thus at most $2s$ vertices of $G$ can be of the first type.
    Altogether, we get
\begin{align*}
	|\Sreach[G,L,v]|&\leq \left (2+1+d+\dots+d(d-1)^{r-2}+d(d-1)^{r-1}\right)s\\
	& \leq \left (3 + d \sum_{\ell=0}^{r-1} (d-1)^\ell \right )s.
\end{align*}
	
    \paragraph*{Case 2: $v\notin S$.} 
	Note that if $u$ is strongly $r$-reachable from $v$, then $u$ belongs either to $S$ or to $N_a$, or to a set in $\mathcal L$.
	We consider a BFS-tree $T$ in $\mathbf G_t$, starting at $N_a$, following only red edges, with depth $r$, and stopping each time it reaches a node in $\mathcal L$. We further remove from $T$ any node with no descendant (in $T$) belonging to $\mathcal L\cup\{S\}$.
	This way we get a tree $T$ rooted at $N_a$, with internal nodes in $\mathcal R$, with depth at most $r$, and with leaves in $\mathcal L$. Let $\mathcal I$ denote the sets of all internal nodes of $T$ and let $\mathcal E$ be the sets of all leaves of $T$. 
	Then $|\mathcal I|\leq 1+d+\dots+d(d-1)^{r-2}$ and $|\mathcal E|\leq d(d-1)^{r-1}$. Consider any vertex $u$ that is strongly $r$-reachable from $v$, and let $P$ be a path from $v$ to $u$ witnessing this. The path $P$ projects on $\mathbf G_t$ as a walk with length at most $r$ from $N_a$ to the vertex $X_u$ containing $u$. From this walk we extract a path $Q$ with length at most $r$ from $N_a$ to $X_u$. All the internal nodes of $Q$ belong to $\mathcal R$ hence all the edges of $Q$ (but maybe the last one) are red (according to \Cref{cl:2}). Moreover, $X_u$ is either $N_a$, or $S$, or it belongs to $\mathcal L$.
		So, either $u\in N_a$, or $u\in S$, 
		or $X_u$ has been reached by a black edge from some internal node of $T$, or $X_u$ is a leaf of $T$.
		The first type correspond to at most $s$ vertices. The second type correspond to at most $2s$ vertices
		because in this case, $Y$, $Z$, or both, have been ordered by $L$ before $N_a$ which mean they are nice at step $t+1$, hence small.
		. The third type correspond to at most $(|\mathcal I|-1)s$, as the root $N_a$ is small hence adjacent to no black edge. The last type correspond to at most $|\mathcal E|s$ vertices.
		Altogether, we get
\begin{align*}
	|\Sreach[G,L,v]|&\leq \left (1+2+(1+d+\dots+d(d-1)^{r-2}-1)+d(d-1)^{r-1}\right)s\\
	&\leq \left (3 + d\sum_{\ell=0}^{r-1}(d-1)^\ell \right)s.
\end{align*}

Thus in both cases we have that every graph $G$ with $\tww(G)=d$ and $\bomega(G)=s$ satisfies~\eqref{eq:ub}.
\end{proof}

\newcommand\ori{\textnormal{origin}}
\begin{remark}\label{remark}\rm
    We also describe an algorithmic procedure that also yields the order $L$ defined in~\Cref{thm:upperbound}.
    We are given an uncontraction sequence $\mathbf G_n,\dots,\mathbf G_1$.
    For each $i \in \{1, \dots, n-1\}$ we define a function $\ori_i \colon V(\mathbf{G}_{i+1}) \to V(\mathbf{G}_i)$ that, informally, assigns each node in $\mathbf G_{i+1}$ to the node in $\mathbf G_i$ that it originates from.
    Let us be more precise: assume $\mathbf G_{i+1}$ is constructed from $\mathbf G_i$ by splitting a node $Z$ into two nodes $X$ and $Y$, then
    $\ori_i(X) = \ori_i(Y) = Z$ and $\ori_i(W) = W$ for every other node $W$ of $\mathbf{G}_{i+1}$.

    Remember that a node of $\mathbf G_i$ is \emph{small} if it contains at most $s$ vertices and is \emph{big}, otherwise.
    A node $X\in V(\mathbf G_i)$ is a \emph{nice} node of $\mathbf G_i$ if $X$ is small and some black edge is incident to $X$ in $\mathbf G_{i}$.
    We incrementally construct for all $i$ an ordering $<_i$ on the nice nodes of $G_i$.
    Since all nodes of $\mathbf G_n$ are nice and correspond to singletons,
    the ordering $<_n$ then corresponds to an ordering of the vertices of $G$.
    This order will be equivalent to the ordering $L$ defined in \Cref{thm:upperbound} (up to non-determinism).

    Since $G_1$ has no nice nodes, $<_1$ is the empty ordering.
    Assuming that $<_i$ is already constructed,
    we construct $<_{i+1}$ such that it satisfies for all nice $X, Y \in V(\mathbf G_{i+1})$
    the following conditions.
    \begin{enumerate}
        \item If $\ori_i(X)$ and $\ori_i(Y)$ are nice in $\mathbf G_i$ and $\ori_i(X) <_i \ori_i(Y)$ then $X <_{i+1}~Y$.
        \item If $\ori_i(X)$ is nice in $\mathbf G_i$ and $\ori_i(Y)$ is not nice in $\mathbf G_i$ then $X <_{i+1}~Y$.
        \item If both $\ori_i(X)$ and $\ori_i(Y)$ are not nice in $\mathbf G_i$ and $\birth(X) > \birth(Y)$ then $X \le_{i+1} Y$.
    \end{enumerate}
    Each order $<_i$ represents a partial order on $V$ that is refined over time as $i$ increases,
    until we reach a total order on $V$.
    Rule 1.\ states that the old order is preserved when possible, 
    rule 2.\ states that new nice sets are appended at the end
    and rule 3.\ makes sure that we append new nice sets in order of their birth.

    In the proof of \Cref{thm:upperbound}, we fix a vertex $v \in V$
    and pick $t$ maximal such that in $\mathbf G_t$ the node $N$ containing $v$ is not nice.
    We then partition the nodes of $\mathbf G_t$ into sets $\mathcal L$ and $\mathcal R$.
    One can show that $\mathcal L$ contains precisely those nodes of $\mathbf G_t$ that are strictly smaller than $N$ with respect to $<_t$.
\end{remark}

\begin{corollary}
	For every graph $G$ and every positive integer $r$ we have 

\[
\col_r(G)\leq\begin{cases}
	2\bomega(G)&\text{if $\tww(G)=0$,}\\
	3\bomega(G)&\text{if $\tww(G)=1$,}\\
	5\bomega(G)&\text{if $\tww(G)=2$,}\\
	3(\tww(G)-1)^r \bomega(G)&\text{if $\tww(G)\geq 3$.}
\end{cases}
\]
\end{corollary}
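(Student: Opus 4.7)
The corollary is obtained from Theorem~\ref{thm:upperbound} by a case analysis on $d = \tww(G)$. Writing $s = \bomega(G)$, that theorem supplies
\[
\col_r(G) \le \Bigl(3 + d\sum_{i=0}^{r-1}(d-1)^i\Bigr)s,
\]
which I would rewrite using the closed form $\sum_{i=0}^{r-1}(d-1)^i = \tfrac{(d-1)^r - 1}{d-2}$ whenever $d \ne 2$.

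For $d \ge 3$, the plan is purely algebraic. Targeting $3(d-1)^r s$, the inequality $3 + \tfrac{d((d-1)^r - 1)}{d-2} \le 3(d-1)^r$ becomes, after clearing the positive denominator $d-2$ and collecting terms, $2(d-3) \le 2(d-3)(d-1)^r$. This holds trivially for every $r \ge 1$: equality when $d = 3$, and strict for $d > 3$ since $(d-1)^r \ge 1$. Multiplying through by $s$ then closes this case.

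For $d \in \{0, 1, 2\}$ the theorem's closed-form bound evaluates to $3s$, $4s$, and $(3+2r)s$ respectively, which already exceeds the sharper constants $2s$, $3s$, $5s$ claimed in the corollary (either at $r=1$ when $d \le 1$, or once $r \ge 2$ when $d=2$). In each of these low-$d$ regimes I would reopen the proof of \Cref{thm:upperbound} and exploit the very restricted shape of the red subgraph of $\mathbf G_t$. Concretely, for $d = 0$ there are no red edges at all, which both forces Case~2 to be vacuous (a node cannot become newly nice without a red edge appearing somewhere in the contraction) and collapses the BFS-tree of Case~1 to the empty pruned tree, leaving only the $2s$ first-type contribution. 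For $d \in \{1, 2\}$ the red graph is a disjoint union of paths and, when $d=2$, cycles, so the BFS-tree used in the proof reduces to a short path or two such paths joined at the root $S$; this has to be combined with \Cref{cl:3} to tighten the per-node accounting.

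I expect the main obstacle to be precisely this refinement for $d = 2$: \Cref{thm:upperbound} on its own only delivers a bound linear in $r$, so reaching the stated constant $5s$ must rely on a genuinely structural argument---presumably showing that along the one-dimensional red path emanating from $S$, the black neighbourhoods of successive internal nodes overlap sufficiently that their contributions do not accumulate along the path.
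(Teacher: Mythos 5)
Your treatment of the case $\tww(G)\ge 3$ is correct and coincides with the paper's: the paper simply bounds $\tww(G)/(\tww(G)-2)\le 3$ in the closed form of the geometric sum, which is equivalent to your reduction to $2(d-3)\le 2(d-3)(d-1)^r$. The genuine gap is in the cases $d\in\{0,1,2\}$, which you only sketch. For $d=0$ your accounting is not right as stated: your observation that Case~2 is vacuous is fine (a node that becomes newly nice at step $t+1$ without being a child of $S$ must have had a red edge to $S$ at step $t$), but in Case~1 the root $S$ is itself an internal node of the (possibly trivial) tree, and if $S$ is big it may carry black edges, contributing up to $s$ further vertices by \Cref{cl:3}; so re-running the proof of \Cref{thm:upperbound} with no red edges yields $3\bomega(G)$, not the claimed $2\bomega(G)$. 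The paper does not obtain the cograph bound this way at all: it proves $\col_r(G)\le 2\bomega(G)$ for cographs by a separate induction along the cotree decomposition (disjoint union or complete join of $G_1,G_2$ with $|G_1|\le|G_2|$, concatenating the two orders), using in the join case $\col_r(G,L)\le\col_r(G_2,L_2)+|G_1|$ together with $\bomega(G)\ge\bomega(G_2)+|G_1|/2$. For $d=1$ and $d=2$ you explicitly leave the decisive step open ("a genuinely structural argument" for $d=2$), so the proposal does not establish the bounds $3\bomega(G)$ and $5\bomega(G)$.

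That said, your diagnosis that \Cref{thm:upperbound} alone cannot give these constants is more careful than the paper itself, which merely asserts that the cases $\tww(G)=1,2$ "follow from the theorem" even though the theorem's bound evaluates to $4\bomega(G)$ for $d=1$ and to $(3+2r)\bomega(G)$ for $d=2$. Your caution is also warranted for $d=0$: the join-case inequality $\bomega(G)\ge\bomega(G_2)+|G_1|/2$ used in the paper fails when the added part is an independent set (for instance $K_{s,s,s}$ is a cograph with $\bomega=s$ and $\col_1\ge 2s+1$), so the small-$d$ cases require a genuinely different and more careful argument than either your sketch or the paper's provides.
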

\begin{proof}
If $\tww(G)\geq 3$ we have
\begin{align*}
\col_r(G) & \le \left (3+\tww(G)\frac{(\tww(G)-1)^r - 1}{\tww(G) - 2}\right)\bomega(G)\\
          & \leq (3 + 3((\tww(G)-1)^r - 1))\bomega(G)\\
          & \leq 3(\tww(G)-1)^r \bomega(G).
\end{align*}
The cases where $\tww(G)=1$ or $2$ follow from the theorem. If $\tww(G)=0$ then $G$ is a cograph. Let us then show that for every cograph $G$ it holds that $\col_r(G) \leq 2 \bomega(G)$.

The proof is by induction on the number of vertices. The base case $|G| = 1$ is trivial so we consider a cograph with at least two vertices and assume that the desired bound holds for all cographs on less vertices.
Being a cograph, $G$ can be obtained from two cographs $G_1$ and $G_2$ by disjoint union or complete join~\cite{corneil1981complement}. Without loss of generality we assume $|G_1| \leq |G_2|$. By induction, for every $i\in\{1,2\}$ there is an ordering $L_i$ of $V(G_i)$ such that $\col_r(G_i) \leq 2\bomega(G_i)$.

Then the order $L$ is obtained by putting first $L_1$, then $L_2$.
If $G$ is the disjoint union of $G_1$ and $G_2$ then $\col_r(G,L)=\max(\col_r(G,L_1),\col_r(G,L_2))$ and the result follows; if $G$ is the complete join of $G_1$ and $G_2$ then $\col_r(G,L)\leq \col_r(G_2,L_2)+|G_1|$ and $\bomega(G)\geq \bomega(G_2)+|G_1|/2$. As $\col_r(G_2,L_2)\leq 2\bomega(G_2)$, we get $\col_r(G_2,L_2) \leq 2\bomega(G)-|G_1|$ hence $\col_r(G,L)\leq 2\bomega(G)$.
\end{proof}

Combining \Cref{lem:st-to-wk} with \Cref{thm:upperbound} we get the following.
\begin{corollary}
\label{cor:wcol}
	For every graph $G$ and every positive integer $r$ we have
\begin{equation*}
	\wcol_r(G)\leq \frac12\bigl((2\tww(G)+6)\bomega(G)\bigr)^r.
\end{equation*}
\end{corollary}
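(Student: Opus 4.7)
The plan is to chain \Cref{lem:st-to-wk} with \Cref{thm:upperbound} and then clean up the $k$-dependent exponents with a short binomial estimate. Writing $d=\tww(G)$ and $s=\bomega(G)$, I would first invoke \Cref{lem:st-to-wk} to obtain
\[
\wcol_r(G)\leq 2^{r-1}\max_{1\leq k\leq r}\col_k(G)^{r/k},
\]
and then substitute $\col_k(G)\leq (d^k+3)s$ from \Cref{thm:upperbound}. The task then reduces to bounding $\bigl((d^k+3)s\bigr)^{r/k}$ by $\bigl((d+3)s\bigr)^r$ uniformly for $k\in\{1,\dots,r\}$.

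The key elementary step is that $(d^k+3)^{1/k}\leq d+3$ for all $k\geq 1$. This follows at once from the binomial expansion
\[
(d+3)^k=\sum_{i=0}^k\binom{k}{i}d^{k-i}3^i\geq d^k+3^k\geq d^k+3,
\]
where the last inequality uses $3^k\geq 3$ whenever $k\geq 1$. For the $s^{r/k}$ part, I would simply observe that we may assume $s\geq 1$ (in the edgeless case $\wcol_r(G)=1$ and the bound can be handled separately), so $s^{r/k}\leq s^r$ whenever $k\leq r$.

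Putting these two estimates together yields
\[
\wcol_r(G)\leq 2^{r-1}\bigl((d+3)s\bigr)^r=\tfrac12\bigl((2d+6)s\bigr)^r=\tfrac12\bigl((2\tww(G)+6)\bomega(G)\bigr)^r,
\]
which is exactly the bound claimed. I do not foresee any real obstacle: the argument is essentially a chain of substitutions together with a one-line binomial inequality. The only delicate point is ensuring that the $k$-dependence inside the maximum in \Cref{lem:st-to-wk} collapses uniformly, and this is precisely what the inequality $(d^k+3)^{1/k}\leq d+3$ provides.
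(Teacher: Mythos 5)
Your proposal is correct and follows exactly the paper's route: the paper simply states that the corollary is obtained by ``combining \Cref{lem:st-to-wk} with \Cref{thm:upperbound}'' without spelling out the computation, and your binomial estimate $(d^k+3)\leq (d+3)^k$ together with $s^{r/k}\leq s^r$ (for $s\geq 1$, $k\leq r$) is precisely the intended uniform collapse of the maximum, giving $2^{r-1}\bigl((d+3)s\bigr)^r=\tfrac12\bigl((2d+6)s\bigr)^r$. The degenerate edgeless case ($\bomega(G)=0$) that you flag is likewise glossed over in the paper's own statements, so it is not a gap specific to your argument.
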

Note that the base of the exponential  comes from the degeneracy of $G$. 
In order to improve this upper bound it is thus natural to try to improve the degeneracy bound. Hence the following problem:
\begin{problem}
What is the maximum degeneracy of a $K_{s+1,s+1}$-free graph with twin-width at most $d$?
\end{problem}

Recall that a \emph{depth $r$ minor} of a graph $G$ is a graph $H$ obtained from $G$ by taking a subgraph and contracting vertex disjoint subgraphs of radius at most~$r$. 
The \emph{greatest reduced average density} (grad) of $G$ with rank $r$ is the maximum ratio $|E(H)|/|V(H)|$ over all (non-empty) depth $r$ minors of a graph $G$; it is denoted by $\nabla_r(G)$. Hence, by definition, a class $\mathscr C$ has bounded expansion if, for each positive integer $r$, we have $\sup\{\nabla_r(G): G\in\mathscr C\}<\infty$. 
It is known that $\nabla_r(G)\leq \wcol_{2r+1}(G)$~\cite{Zhu2008}.
Hence the next corollary directly follows from \Cref{cor:wcol}.
\begin{corollary}
For every graph $G$ and every positive integer $r$ we have
\begin{equation}
	\nabla_r(G)\leq \frac12((2\tww(G)+6)\bomega(G))^{2r+1}.	
\end{equation}
\end{corollary}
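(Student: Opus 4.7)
The plan is to simply chain two known inequalities. The key observation is that the bound $\nabla_r(G)\leq \wcol_{2r+1}(G)$ from Zhu~\cite{Zhu2008}, which is cited in the paragraph immediately preceding the corollary, reduces the whole statement to controlling the weak colouring number at radius $2r+1$ rather than at radius $r$.

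First, I would recall the inequality $\nabla_r(G)\leq \wcol_{2r+1}(G)$ and use it as the starting point, so that it suffices to bound $\wcol_{2r+1}(G)$. Second, I would invoke \Cref{cor:wcol}, which yields
\[
\wcol_{k}(G)\leq \tfrac12\bigl((2\tww(G)+6)\bomega(G)\bigr)^{k}
\]
for every positive integer $k$; applying this at $k=2r+1$ gives exactly the desired right-hand side. Chaining the two bounds then immediately produces the claimed inequality.

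There is really no obstacle here: the statement is stated as a direct corollary, and no new combinatorial content has to be introduced beyond substituting $k=2r+1$ into the already-established \Cref{cor:wcol}. The only thing worth noting is that the proof does not attempt to shave the constant or the exponent, since any improvement would either have to improve the radius relation between $\nabla_r$ and $\wcol$ or improve \Cref{cor:wcol} itself (which in turn would require addressing the degeneracy question raised in the preceding \textbf{Problem}).
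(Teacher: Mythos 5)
Your proposal is correct and matches the paper's argument exactly: the paper also obtains this corollary by combining the inequality $\nabla_r(G)\leq \wcol_{2r+1}(G)$ of Zhu with \Cref{cor:wcol} applied at radius $2r+1$. Nothing further is needed.
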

In particular, every class of graphs of bounded clique-width that exclude a biclique as a subgraph has (at most) exponential expansion.

\section{Lower bounds}

It is known that high-girth graphs have large strong coloring numbers~\cite{Grohe2018}.
On the other hand, there exist expander graphs with high girth and small twinwidth~\cite{twin-width2}.
We combine both results to construct graphs with small twinwidth whose strong $r$-coloring numbers grow exponentially in $r$.

\begin{proposition}[{\cite[Theorem 5.1]{Grohe2018}}]\label{prop:scol-girth}
Let $G$ be a $d$-regular graph of girth at least $4g+1$, where $d\ge 7$. Then for every $r \le g$,
\[
    \col_r(G)\geq \frac{d}{2}\biggl(\frac{d-2}{4}\biggr)^{2^{\lfloor \log_2 r \rfloor}-1}.
\]
\end{proposition}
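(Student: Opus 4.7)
The strategy I would take exploits the tree-like structure of radius-$r$ balls in $G$ (guaranteed by the girth assumption $\geq 4g+1 \geq 2r+1$) together with an inductive doubling argument, aiming to show that the outer shells of these trees are large enough to force many vertices to be strongly reachable no matter how the ordering is chosen.

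\textbf{Reduction to powers of 2.} Since $\col_r(G)$ is monotone non-decreasing in $r$ while the right-hand side is constant on each interval $[2^k,2^{k+1})$, it suffices to prove the bound for $r = 2^k$ with $r \leq g$. I would then induct on $k$. The base case $k=0$ gives $\col_1(G) \geq d/2$, which is immediate from $d$-regularity: the last vertex in any ordering has all $d$ of its neighbors before it and thus strongly $1$-reachable.

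\textbf{Doubling step.} The inductive step amounts to showing, for $2r \leq g$, the key inequality
\[
\col_{2r}(G) \;\geq\; \Bigl(\frac{d-2}{4}\Bigr)^{r}\,\col_r(G),
\]
which, when iterated, produces the claimed exponent $2^k-1$. To prove this, fix an adversarial ordering $L$ and examine, for each vertex $v$, paths of length at most $2r$ starting at $v$. Because $2r \leq g$, the ball $B_{2r}(v)$ is a tree, so the $\approx d(d-1)^{2r-1}$ such paths have distinct vertex sets, and each path $P$ with vertices $\{v=x_0,x_1,\dots,x_\ell\}$ is a witness of strong $2r$-reachability of $x_\ell$ from $v$ precisely when $x_\ell \leq_L v$ and $x_1,\dots,x_{\ell-1} >_L v$. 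Equivalently, $P$ is a back-path from $v$ iff the two smallest elements of $V(P)$ under $L$ are exactly the endpoints, with $v$ being the second smallest. Double-counting pairs $(v,P)$ yields that the total number of such witnesses, summed over $v$, equals the number of paths of length $\leq 2r$ in $G$ whose smallest and second-smallest vertices are its two endpoints; this count is then turned into a per-vertex lower bound by a pigeonhole together with the inductive bound on $\col_r(G)$, with the exponentially many extensions into the outer shell of the tree (of size $(d-1)^r$) contributing the factor $((d-2)/4)^r$ after absorbing the ordering constraints.

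\textbf{Where I expect the main obstacle.} The delicate part is the averaging / counting step that converts an expectation over orderings into a worst-case guarantee for the fixed ordering $L$. A naive expected-value calculation over a uniformly random ordering already produces a count of order $(d-1)^{2r}/r^2$, which is much stronger than the claimed bound; however, making this deterministic requires either iteratively conditioning on the relative positions of near-neighbors, or identifying a vertex $v$ that is a local ``second-minimum'' in a carefully constructed nested family of subsets of $V(G)$. The girth condition enters twice: it guarantees that $B_{2r}(v)$ is a tree, so that paths are in bijection with tree branches and do not collide, and it ensures that the $(d-1)^r$-sized outer shell around each length-$r$ back-path contributes genuinely distinct new endpoints, which is what drives the doubling and hence the exponential lower bound.
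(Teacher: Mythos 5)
Note first that the paper does not contain a proof of \Cref{prop:scol-girth} at all: it is imported verbatim from \cite{Grohe2018} (Theorem 5.1), so what you are attempting is a re-proof of that external result. Your outer scaffolding is sound and matches the shape of the known argument: reducing to $r=2^k$ by monotonicity of $\col_r$ is correct, the base case $\col_1(G)\ge d/2$ is correct, and a doubling recursion such as $\col_{2r}(G)\ge\bigl(\tfrac{d-2}{4}\bigr)^{r}\col_r(G)$ would indeed iterate to the stated exponent $2^{\lfloor\log_2 r\rfloor}-1$. The gap is that this doubling inequality, which is the entire content of the theorem, is never proved; everything after ``fix an adversarial ordering $L$'' is a description of hopes rather than an argument, and you say so yourself in the paragraph beginning ``Where I expect the main obstacle.''

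Concretely, three things are missing or misdirected. First, the appeal to a uniformly random ordering is vacuous here: $\col_r$ is a minimum over orderings, so an expectation bound over random $L$ says nothing about the minimizing $L$; the adversarial case is the theorem. Second, the double-counting identity you state only rewrites $\sum_v|\Sreach_{2r}[L,v]|$ for a fixed $L$; to conclude via ``max $\ge$ average'' you would need that this sum is at least $n$ times the claimed bound for \emph{every} ordering, which is not argued and is essentially equivalent to the theorem (it is entirely possible that for a good ordering most vertices have small $\Sreach_{2r}$ while only few attain the maximum). Third, the crucial point --- how the inductive hypothesis at radius $r$ produces strongly $2r$-reachable vertices --- is never specified: knowing that some vertex $w$ has $|\Sreach_r[L,w]|$ large does not compose, since concatenating two back-paths witnesses weak, not strong, reachability (all interior vertices of the combined path must lie above its start vertex, which concatenation does not guarantee), and your ``pigeonhole together with the inductive bound, absorbing the ordering constraints'' does not explain how this is overcome. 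That transfer between scales, for an arbitrary ordering, is exactly what the proof in \cite{Grohe2018} accomplishes with a careful inductive construction tailored to the given order, and it is the step your proposal defers. As written, the proposal is a plausible plan with the central lemma unproven, so it cannot be accepted as a proof of \Cref{prop:scol-girth}; if you want to avoid redoing that work, the honest route is the one the paper takes, namely citing \cite{Grohe2018} directly.
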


\begin{lemma}
\label{lem:bigcol}
	For every integer $\Delta \geq 7$ and every integers $r$ and $g\geq 4r+1$ there exists a $\Delta$-regular graph $G$ with girth at least $g$, $2\Delta-1\leq\tww(G) \le 2\Delta$, and
\[
\col_r(G))\geq \frac{\Delta}{2}\biggl(\frac{\Delta-2}{4}\biggr)^{2^{\lfloor \log_2 r\rfloor}-1}\geq \frac{\tww(G)}{4}\biggl(\frac{\tww(G)-4}{8}\biggr)^{2^{\lfloor \log_2 r\rfloor}-1}.
\]
\end{lemma}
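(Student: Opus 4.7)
The plan is to combine two ingredients: a construction of $\Delta$-regular high-girth graphs with controlled twin-width, as provided by~\cite{twin-width2}, and \Cref{prop:scol-girth}. First, I would invoke from~\cite{twin-width2} the existence, for every $\Delta\geq 3$ and every integer $g_0$, of a $\Delta$-regular graph $G$ of girth at least $g_0$ satisfying $2\Delta-1 \leq \tww(G)\leq 2\Delta$. The upper bound $\tww(G)\leq 2\Delta$ follows from a direct contraction argument: in a $\Delta$-regular graph, contracting two nodes creates red edges only in the symmetric difference of their neighborhoods, of size at most $2\Delta$, and a careful contraction sequence maintains this invariant throughout. The lower bound exploits the high girth: since any two vertices share at most one common neighbor when the girth is at least $5$, every ``early'' contraction produces red degree close to $2\Delta$.

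With such a $G$ of girth at least $g\geq 4r+1$ in hand, I apply \Cref{prop:scol-girth}: the hypotheses ``girth at least $4r+1$'' and $\Delta\geq 7$ are both satisfied, and the value of $r$ in the proposition's conclusion is allowed to equal our $r$. Thus we obtain directly
\begin{equation*}
    \col_r(G) \geq \frac{\Delta}{2}\biggl(\frac{\Delta-2}{4}\biggr)^{2^{\lfloor \log_2 r\rfloor}-1},
\end{equation*}
which is the first inequality in the statement.

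For the second inequality, I use $\tww(G)\leq 2\Delta$, equivalently $\Delta\geq \tww(G)/2$. This yields $\Delta/2 \geq \tww(G)/4$ and $(\Delta-2)/4 \geq (\tww(G)-4)/8$, with both quantities nonnegative since $\Delta\geq 7$ forces $\tww(G)\geq 2\Delta-1\geq 13$. Substituting termwise (monotone in both the leading factor and the base of the exponent) yields the final inequality. The main obstacle is justifying the sharp twin-width range $2\Delta-1 \leq \tww(G) \leq 2\Delta$: the upper bound requires exhibiting an explicit $2\Delta$-contraction sequence on the chosen high-girth expander, whereas the lower bound is the more delicate half, since it must show that the high girth forces any contraction sequence to expose red degree at least $2\Delta-1$ at some point.
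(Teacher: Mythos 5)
Your reduction to \Cref{prop:scol-girth} and the final monotonicity step (using $\tww(G)\le 2\Delta$ to pass from $\Delta$ to $\tww(G)$) are fine and match the paper. The gap is in the part you treat as citable: the existence, for \emph{every} $\Delta\ge 7$ and every girth bound $g$, of a $\Delta$-regular graph of girth at least $g$ with $\tww(G)\le 2\Delta$. This is precisely what the paper has to construct, and it is not available off the shelf in \cite{twin-width2}: that paper's 2-lift argument gives \emph{cubic expanders} of twin-width at most $6$, not high-girth $\Delta$-regular graphs for arbitrary $\Delta$. Your fallback justification for the upper bound is moreover incorrect as stated: it is not true that ``a careful contraction sequence'' of an arbitrary $\Delta$-regular graph keeps red degree at most $2\Delta$. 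After the first few contractions the nodes are large vertex sets, and their red neighbourhoods are not controlled by the symmetric difference of two vertex neighbourhoods; in fact bounded-degree (even cubic) graphs have unbounded twin-width, since they do not form a small class (this is one of the points of \cite{twin-width2}). So the upper bound $\tww(G)\le 2\Delta$ really depends on the specific structure of the graph, not merely on regularity.

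What the paper does, and what is missing from your proposal, is the explicit construction: start from $G_0=K_{\Delta+1}$ and iterate a covering construction $V(G_k)=V(G_{k-1})\times\{0,1\}^{m}$ in which traversing an edge $e_i$ flips the $i$-th coordinate. One then proves two things. First, $G_k$ is obtained from $K_{\Delta+1}$ by a sequence of $2$-lifts, so a contraction sequence that undoes these lifts by repeatedly contracting the pairs of duplicate vertices keeps red degree at most $2\Delta$ (this is where \cite[Lemma~26]{twin-width2} is actually used), giving $\tww(G_k)\le 2\Delta$. Second, the girth at least doubles at each step, because any cycle of $G_k$ projects to a closed walk of $G_{k-1}$ in which every edge-flip $\theta_e$ must occur an even number of times; iterating until the girth exceeds $4r+1$ then allows the application of \Cref{prop:scol-girth}. (The lower bound $\tww(G)\ge 2\Delta-1$ is the easy part, coming from the red degree created by the first contraction in a girth-$\ge 5$ regular graph, as you indicate.) Without the construction and the girth-doubling argument, the central existence claim of \Cref{lem:bigcol} is unproved in your write-up.
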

\begin{proof}
We will construct a sequence $G_0,G_1,\dots$ of $\Delta$-regular graphs of twin-width at most $2\Delta$ and increasing girth.
Once we reach a graph with girth at least $g\geq 4r+1$, the result of this lemma follows from \Cref{prop:scol-girth}. Note that the twin-width of a $\Delta$-regular graph with girth at least $5$ is at least $2\Delta-1$ (because of the first contraction).

We define $G_0$ to be the complete graph with $\Delta+1$ vertices.
We fix a graph $G_{k-1}$ with edges $e_1,\dots,e_m$ and describe how to construct $G_k$.
For every edge $e_i$ of $G_{k-1}$, we define a mapping $\theta_{e_i} \colon \{0,1\}^m \to \{0,1\}^m$
that flips the $i$th coordinate and preserves all other coordinates, i.e.,
$\theta_{e_i}((x_1,\dots,x_m)) = (y_1,\dots,y_m)$ with
\[
    y_j=\begin{cases}
    1-x_j&\text{if }j=i\\
    x_j&\text{otherwise}.
    \end{cases}
\]
We define $G_{k}$ to be the graph with $V(G_k)=V(G_{k-1})\times \{0,1\}^m$ and $E(G_k)=\{\{(u,\mathbf{x}),(v,\theta_{uv}(\mathbf{x})\} : uv\in E(G_{k-1})\text{ and }\mathbf x \in \{0,1\}^m\}$.

A \emph{$2$-lift} of a graph $G$ is a graph obtained by adding for every vertex $v$ of $G$ two vertices $v_1$ and $v_2$, and adding for every edge $uv$ of $G$
either the edges $u_1v_1$ and $u_2v_2$ (parallel edges) or the edges $u_1v_2$ and $u_2v_1$ (crossing edges).
The graph $G_{k+1}$ can be obtained by a sequence of $2$-lifts from $G_k$
and therefore also by a sequence of $2$-lifts from $G_0 = K_{\Delta+1}$.
We can construct a contraction sequence that ``undoes`` these $2$-lifts by repeatedly contracting all pairs of duplicates.
Once we reach $K_{\Delta+1}$, we simply contract the remaining vertices one by one.
While doing so, the red degree never exceeds $2\Delta$ (see also \cite[Lemma 26]{twin-width2}).
Hence $\tww(G_k)\leq 2\Delta$.

It remains to show that the girth of $G_{k}$ is higher than the girth of $G_{k-1}$.
Let $\gamma$ be a shortest cycle of $G_k$. Let $p_V:V(G_{k})\rightarrow
V(G_{k-1})$ be the standard projection, and let $p_E:E(G_{k})\rightarrow E(G_{k-1})$
be the associated projection. It is easily checked that applying $p_E$
to a cyclic graph yields a cyclic graph, and thus $p_E(\gamma)$ includes a cycle.
If we apply the composition of all the mappings $\theta_{p_E(e)}$ for $e\in\gamma$ then the
starting vertex of $\gamma$ is fixed. It follows that each $\theta_e$ is
applied an even number of times. Thus the length of $\gamma$ is at least twice
the length of $p_E(\gamma)$.
Hence, the girth of $G_k$ is at least twice the girth of $G_{k-1}$. 
\end{proof}

\begin{corollary}\label{cor:lowerboundscol}
	For every integer $d\geq 14$, every positive integer $s$, and every integer $r$ of the form $2^k$, there exists a graph $G$ with $\tww(G)\leq d$, $\bomega(G)=s$, and 
\[
    \col_r(G)\geq \frac{ds}{4}\biggl(\frac{d-4}{8}\biggr)^{r-1}\geq 2\biggl(\frac{\tww(G)-4}{8}\biggr)^r\bomega(G).
\]
\end{corollary}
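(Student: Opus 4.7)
The plan is to derive this corollary from \Cref{lem:bigcol} via a vertex-blow-up operation that boosts $\bomega$ from $1$ to $s$ while preserving the twin-width bound and scaling $\col_r$ linearly in $s$. First I would set $\Delta := \lfloor d/2 \rfloor$, so that $\Delta \geq 7$ (since $d\geq 14$) and $2\Delta \leq d$, and apply \Cref{lem:bigcol} with this $\Delta$ and girth target $g := 4r+1$. This produces a $\Delta$-regular graph $G_0$ with $\tww(G_0) \leq 2\Delta \leq d$, girth at least $4r+1$ (so in particular $\bomega(G_0) = 1$, since a $K_{2,2}$ would be a $4$-cycle), and, using $r = 2^k$ to simplify $2^{\lfloor\log_2 r\rfloor}-1 = r-1$,
\[
\col_r(G_0) \;\geq\; \frac{\Delta}{2}\left(\frac{\Delta-2}{4}\right)^{r-1}\;\geq\; \frac{d}{4}\left(\frac{d-4}{8}\right)^{r-1}.
\]

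Next I would define $G$ to be the $s$-blow-up of $G_0$: each vertex $x \in V(G_0)$ is replaced by an independent set $B(x)$ of $s$ twin copies, and each edge $xy$ by the complete bipartite graph between $B(x)$ and $B(y)$. Three properties need to be checked. For $\bomega(G) = s$, any edge of $G_0$ spans a $K_{s,s}$ in $G$, while any $K_{s+1,s+1}$ in $G$ would have to use vertices from at least two distinct blobs on each side and would therefore force a $K_{2,2}$ in $G_0$, contradicting its girth. For $\tww(G) \leq \tww(G_0) \leq d$, I would prepend an optimal contraction sequence of $G_0$ with the contractions of the $s$ twin copies inside each blob; these twin contractions introduce no red edges, so the red degree throughout stays bounded by $\tww(G_0)$.

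The core step is showing $\col_r(G) \geq s\cdot\col_r(G_0)$. Given any linear order $L$ on $V(G)$, I would write $\ell(x)$ for the $L$-maximum element of the blob $B(x)$ and define an induced order $L'$ on $V(G_0)$ by $x <_{L'} y \iff \ell(x) <_L \ell(y)$. Picking $x^\star \in V(G_0)$ that maximizes $|\Sreach_r[L',x^\star]|$ and setting $v := \ell(x^\star)$, I would lift each witnessing path $x^\star = w^\star_0,\dots,w^\star_k = u^\star$ in $G_0$ to $s$ paths in $G$ by replacing each interior $w^\star_i$ with $\ell(w^\star_i)$ (which lies above $v$ in $L$ since $w^\star_i >_{L'} x^\star$) and letting the endpoint range over $B(u^\star)$ (each of whose elements is $\leq_L v$ since $u^\star \leq_{L'} x^\star$). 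This produces $s$ distinct members of $\Sreach_r[L,v]$ per element $u^\star$, so $|\Sreach_r[L,v]|\geq s\cdot\col_r(G_0)$; minimizing over $L$ gives the claim. Combining everything yields $\col_r(G) \geq \frac{ds}{4}\left(\frac{d-4}{8}\right)^{r-1}$. The second displayed inequality is routine: it reduces to $\frac{d}{4}\geq\frac{d-4}{4}$ after using $\tww(G)\leq d$. The main obstacle is the lifting argument above, where choosing $L'$ from the \emph{last} element of each blob is precisely what is needed to satisfy the ``strictly greater than $v$'' requirement imposed on interior vertices of a strong-reachability witness.
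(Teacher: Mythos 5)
Your overall route is the same as the paper's: its proof of \Cref{cor:lowerboundscol} is a single sentence (take the lexicographic product of the graph from \Cref{lem:bigcol} with $K_s$), and you are supplying exactly the details it omits -- the choice $\Delta=\lfloor d/2\rfloor$, the computation of $\bomega$, the twin-contraction argument for the twin-width, and the transfer of the $\col_r$ lower bound through the blow-up. The one place you deviate is that you replace each vertex by an \emph{independent} set rather than by a clique, and this is where your argument has a genuine (if small) gap. In your lifting step, the claim that every $u^\star\in\Sreach_r[L',x^\star]$ contributes $s$ elements of $\Sreach_r[L,v]$ fails for $u^\star=x^\star$: the only path in $G_0$ from $x^\star$ to itself is the trivial one, and since $B(x^\star)$ is independent, the other $s-1$ vertices of $v$'s own blob are not adjacent to $v$ and need not be strongly $r$-reachable from it at all (nothing prevents every vertex of every neighbouring blob from lying below $v$ in $L$). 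So what you actually get is $|\Sreach_r[L,v]|\ge s\bigl(\col_r(G_0)-1\bigr)+1$, which misses the stated bound $\frac{ds}{4}\bigl(\frac{d-4}{8}\bigr)^{r-1}$ by $s-1$.

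The paper's choice of clique blobs repairs this for free: there the $s-1$ vertices of $B(x^\star)$ preceding $v=\ell(x^\star)$ are neighbours of $v$, hence lie in $\Sreach_1[L,v]$, restoring the full count $s\cdot\col_r(G_0)$. Everything else in your write-up survives the switch: contracting true twins creates no red edges, so your contraction-sequence argument gives $\tww(G)\le\tww(G_0)\le 2\Delta\le d$ verbatim, and $K_{s+1,s+1}$-freeness still follows from girth at least $5$ (such a biclique would need $2s+2$ vertices, so it cannot sit inside two adjacent blobs, and spreading over more blobs forces a triangle or a $C_4$ in $G_0$). A second, more cosmetic point: for odd $d$ your choice $\Delta=\lfloor d/2\rfloor$ only yields $\frac{d-1}{4}\bigl(\frac{d-5}{8}\bigr)^{r-1}$ rather than $\frac{d}{4}\bigl(\frac{d-4}{8}\bigr)^{r-1}$, so your first displayed inequality is not literally correct in that case; the paper's one-line proof shares this imprecision, so I only flag it, but the blob-counting issue above is specific to your independent-set variant and is what you should fix.
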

\begin{proof}
Take the lexicographic product of a graph obtained by \Cref{lem:bigcol} and $K_{s}$. This way we get a graph with twin-width at most $d\geq 14$ and no $K_{s+1,s+1}$.
\end{proof}

\begin{remark}
	The $2$-lift construction used in the proof of \Cref{lem:bigcol} was used in \cite{twin-width2} to prove that there exist cubic expander graphs with twin-width at most $6$. It follows from this result and the characterization of classes with polynomial expansion~\cite{dvorak2016strongly}
	that for $d \geq 6$, 
	the value $\nabla_r(G)$ is not bounded on 	the $K_{2,2}$-free graphs with twin-width at most $d$ by a polynomial function of $r$.
	We leave as a question wether 
	$\sup\{\nabla_r(G): \tww(G)\leq d\text{ and }\bomega(G)\leq s\}$ increases exponentially with $r$ for sufficiently large~$d$.
\end{remark}

Admissibility being a lower bound for strong coloring numbers, the above results do not provide any lower bound for admissibility.
We show below how to construct classes of graphs that have no $K_{2,2}$-subgraph with low twin-width and high admissibility.
\begin{lemma}[{\cite[Proposition~28]{twin-width2}}]\label{lem:subdcliq0}
    For $d \geq 0$ and $k>0$, if the clique $K_n$ subdivided $k$ times has twin-width less than $d$, then $k \geq \log_{d}(n-1)-1$.
\end{lemma}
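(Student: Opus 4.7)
The plan is to analyze an uncontraction sequence of $G = $ the $k$-subdivision of $K_n$ (assumed to have twin-width less than $d$) and bound how many of the $n$ ``originals'' (the clique vertices) lie close to one side of a well-chosen split in one of its trigraphs.

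The key structural observation is that for $k \geq 1$, any two distinct originals have disjoint $G$-neighborhoods: each original has $n - 1$ neighbors, all degree-$2$ subdivision vertices tied to a single clique edge. Combined with the standard characterization (provable by induction on contractions using the intersection rule for black adjacency) that two nodes $A, B$ are black-adjacent in any trigraph of the sequence iff every vertex of $A$ is $G$-adjacent to every vertex of $B$, this implies that any trigraph node containing $\geq 2$ originals has no black neighbors, hence has at most $d - 1$ (red) neighbors in total.

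I would then pick the earliest step $i^*$ of the uncontraction sequence at which the unique active node (containing all $n$ originals) splits into two active nodes $X, Y$ with $a$ and $n - a$ originals respectively; WLOG $a \geq \lceil n/2 \rceil \geq 2$, so by the observation above $X$ has at most $d - 1$ neighbors. For each original $v$ outside $X$ and any fixed $u \in X$, the unique $G$-path $P_{uv}$ of length $k + 1$ projects to a walk of length $\leq k + 1$ in $\mathbf{G}_{i^*}$ from $X$ to the node of $v$. The core claim, proved by induction on $\ell \leq k + 1$, is that the number of originals lying in nodes at trigraph-distance at most $\ell$ from $X$ is at most $d^\ell$; applied at $\ell = k + 1$ this gives $n - a \leq d^{k + 1}$. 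A symmetric (but slightly more subtle) treatment of $a$---using $Y$ and the cascade of subsequent splits when $Y$ itself is small---completes the bound $n - 1 \leq d^{k + 1}$, equivalent to $k \geq \log_d(n - 1) - 1$.

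The main obstacle is the inductive BFS step. The red-degree bound $\leq d - 1$ is available only at nodes with $\geq 2$ originals, while an intermediate node along the walk (with $0$ or $1$ originals) could a priori have unbounded black degree. The careful argument must exploit the rigidity of the $k$-subdivision---each subdivision vertex has exactly two $G$-neighbors lying on a single subdivision path---to show that even then, only $O(d)$ of the branches can lead to \emph{new} originals within the remaining $k + 1 - \ell$ steps; one natural route is a case analysis on how many distinct subdivision paths of $K_n$ contribute vertices to each intermediate node, forcing black-neighbors that do not already duplicate red-reachable directions.
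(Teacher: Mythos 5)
First, a remark on scope: the paper does not prove this lemma at all, it imports it from the cited reference (Twin-width II, Proposition~28), so your sketch has to stand on its own, and it does not yet.

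The decisive problem is your choice of the step $i^*$. If $i^*$ is the first step of the uncontraction sequence at which the node containing all $n$ originals splits into $X$ and $Y$ with $a$ and $n-a$ originals, then at that very trigraph \emph{every} original outside $X$ lies in the single node $Y$, and $Y$ is itself within trigraph-distance at most $k+1$ of $X$ (typically red-adjacent to it). Hence your ``core claim'' --- at most $d^{\ell}$ originals in nodes at trigraph-distance at most $\ell$ from $X$ --- is false at this trigraph (already for small $\ell$ the one node $Y$ carries $n-a$ originals), and no induction can establish it there: counting nodes reached by a BFS says nothing about counting originals when a single node contains almost all of them. This also explains why your ``symmetric treatment of $a$'' stays vague and why even on its own terms it would only give something like $n\le 2d^{k+1}$ rather than $n-1\le d^{k+1}$. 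The argument has to be anchored at the opposite end of the sequence: take the largest $i$ such that some node of $\mathbf G_i$ contains two originals (in the contraction direction, the first contraction putting two originals $u,v$ into one node $Z$). There, $Z$ has exactly two originals, \emph{every other node has at most one original}, and each remaining original $w$ is reached from $Z$ by the projection of the length-$(k+1)$ path $P_{uw}$; now bounding the number of nodes reachable within $k+1$ steps really does bound the number of other originals, which is what the count needs.

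The second gap is the one you flag yourself, and it is the heart of the proof, not a detail to postpone: bounding the branching of the projected walks when black edges occur. It can be closed, but by a concrete degree computation rather than an appeal to ``rigidity'': black neighbors of a node $A$ are pairwise disjoint subsets of $\bigcap_{a\in A}N_G(a)$, every internal node of a projected walk contains a subdivision vertex (whose $G$-degree is $2$), and $Z$ contains two originals; so each node actually traversed has at most two black neighbors (at most one for $Z$), which together with the at most $d-1$ red neighbors caps the branching and yields a bound of the form $n\le (d+1)^{k+1}$ up to additive constants --- matching the exact constant $d$ of the statement needs a sharper accounting of which black neighbors can continue a walk toward a new original. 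Finally, a small error to fix: for $k=1$ two originals do \emph{not} have disjoint neighborhoods (the subdivision vertex of the edge $uv$ is a common neighbor), so ``a node with two originals has no black neighbor'' must be weakened to ``at most one black neighbor''; the statement does cover $k=1$.
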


\begin{lemma}[{\cite[Proposition~31]{twin-width2}}]\label{lem:subdcliq}
    For any $c>0$, the class of cliques $K_n$ subdivided at least $\frac{\log n}{c}$ times has twin-width at most $f(c)$ for some triple exponential function~$f$.
\end{lemma}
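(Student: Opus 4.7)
The plan is to exhibit, for each $c > 0$, a contraction sequence for the $k$-subdivision of $K_n$ (with $k \ge \log n / c$) whose maximum red degree is bounded by some $f(c)$ independent of $n$. The guiding idea is to exploit the extra length in each subdivision path to \emph{route} it along a balanced hierarchical decomposition of $V(K_n)$, and then collapse the routing bottom-up so that at every intermediate stage only a bounded number of red edges survive.

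First, I would identify the $n$ original vertices of $K_n$ with the leaves of a balanced binary tree $T$ of depth $\ell = \lceil \log_2 n \rceil$. For each edge $\{u,v\}$ of $K_n$, its subdivision path has length $k+1 \ge \log n / c$, while the $u$--$v$ path in $T$ has length at most $2\ell$; thus I can cut the subdivision path into $2\ell$ consecutive \emph{blocks}, each of size $\Theta(1/c)$, and declare that block $j$ is routed along the $j$-th edge of the tree path from $u$ to $v$. Every subdivision vertex is then labelled by the pair $(\text{tree-edge}, \text{position-in-block})$, which will drive the whole contraction.

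Next I would perform the contractions bottom-up along $T$. At stage $d = \ell, \ell-1, \dots, 1$, for every internal node $a$ of $T$ at depth $d$, I would contract into one super-node all original vertices of $K_n$ descending from $a$ together with every block of subdivision vertices whose routing lies entirely in the subtree rooted at $a$; blocks whose routing still exits this subtree remain as separate nodes. After stage $d$, the trigraph has one super-node per internal node of $T$ at depth $d$, and the surviving red edges correspond to routings whose lowest common ancestor is at depth strictly less than $d$. Since each super-node is connected by a routing to only $O(1)$ sibling super-nodes per ancestor level, and only $O(1/c)$ blocks of each tree-edge survive at any given time, the red degree at stage $d$ is bounded in terms of $c$ alone.

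The main obstacle will be to make the last sentence rigorous: when many routings share the same tree edge at an intermediate stage, one has to argue that the corresponding red edges \emph{collapse} rather than accumulate, which requires a delicate inductive accounting of how block contractions across sibling super-nodes interact. The triple-exponential shape of $f(c)$ should come from compounding three successive blow-ups, one for the block size $\Theta(1/c)$ per tree edge, one for the number of simultaneously active blocks per super-node, and one from propagating this bound across the $\ell$ levels of $T$. Since this is Proposition 31 of \cite{twin-width2}, the precise constants would then be imported from the explicit construction given there.
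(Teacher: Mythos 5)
This lemma is not proved in the paper at all: it is imported verbatim as Proposition~31 of \cite{twin-width2}, so within this paper the ``proof'' is the citation. Judged as a standalone argument, your sketch has a genuine gap, and in fact its central quantitative step fails in the main regime. With $k\ge \log n/c$ subdivision vertices per edge but a tree route of length about $2\log_2 n$, your blocks have size roughly $1/(2c)$, which is less than one vertex as soon as $c\ge 1/2$; so the routing ``one block per tree edge'' cannot even be set up precisely when the hypothesis is weakest (few subdivisions, large $c$), which is the interesting case. The correct picture is the opposite: each subdivision vertex must absorb $\Theta(c)$ levels of the hierarchy, i.e.\ carry $\Theta(c)$ bits of information about each endpoint, and this is exactly where the (at least) exponential dependence of $f$ on $c$ originates; a scheme with $\Theta(1/c)$-sized blocks cannot produce it.

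Moreover, the step you yourself flag as ``the main obstacle'' is the whole content of the result, and it is not a matter of bookkeeping. A tree edge near the root lies on the route of $\Theta(n^2)$ subdivision paths, and the blocks you leave uncontracted on that edge have pairwise distinct interfaces (they continue toward different endpoints or different neighbouring blocks). You may only contract two such blocks once the information distinguishing their far endpoints has already been merged, so one needs an explicit invariant describing, at every stage, how much endpoint information each surviving node still encodes and why nodes with identical residual information exist in bounded number per neighbourhood; nothing in the sketch supplies this. Finally, closing the argument by saying the constants ``would then be imported from the explicit construction given there'' appeals to the very proposition being proved, so as written the proposal does not constitute an independent proof --- though, again, for the purposes of this paper a citation of \cite[Proposition~31]{twin-width2} is all that is required.
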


\begin{lemma}\label{lem:lowbo2}
For every integers $d$ and $r\geq 4$ there is a graph $G$ such that
\begin{itemize}
    \item $G$ has no $K_{2,2}$ subgraph;
    \item $\tww(G) \leq f(2 \log d)$;
    \item $\adm_r(G) \geq d^{2(r-1)}$,
\end{itemize}
where $f$ is the function of \Cref{lem:subdcliq}.
\end{lemma}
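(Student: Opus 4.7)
The plan is to take $G$ to be the graph obtained by subdividing every edge of a complete graph $K_n$ exactly $r-1$ times, where $n$ is chosen of order $d^{2(r-1)}$ (concretely, any $n$ with $n-1 \geq d^{2(r-1)}$ works for the admissibility bound). Let $V_0$ denote the $n$ ``branch'' vertices originally present in $K_n$; the remaining vertices of $G$ are subdivision vertices of degree $2$, and each vertex of $V_0$ has degree $n-1$ in $G$.

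Since subdividing each edge of $K_n$ at least once turns every triangle into a cycle of length $3r$, the girth of $G$ is $3r \geq 12$ under the assumption $r \geq 4$. In particular $G$ contains no $4$-cycle and therefore no $K_{2,2}$-subgraph, settling the first requirement.

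For the admissibility lower bound, fix any linear order $L$ on $V(G)$ and let $v$ be the $L$-largest vertex of $V_0$. For each $u \in V_0 \setminus \{v\}$, the subdivided edge $uv$ of $K_n$ yields a path $P_u$ of length exactly $r$ from $v$ to $u$ in $G$. Since distinct subdivided edges share no internal vertex, the paths $\{P_u\}_{u \in V_0 \setminus \{v\}}$ pairwise intersect only at $v$, and each endpoint $u$ lies before $v$ in $L$ by the choice of $v$. This gives $b_r(L,v) \geq n-1 \geq d^{2(r-1)}$, and since $L$ was arbitrary, $\adm_r(G) \geq d^{2(r-1)}$.

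For the twin-width bound I would apply \Cref{lem:subdcliq} directly. With $k = r-1$ subdivisions per edge and $n$ roughly $d^{2(r-1)}$, one has $\log n \leq 2(r-1)\log d + O(1)$, so the hypothesis $k \geq \log n / c$ of \Cref{lem:subdcliq} is satisfied with $c$ essentially equal to $2\log d$, yielding $\tww(G) \leq f(2\log d)$ (treating $f$ as monotone so that small additive slack in $c$ is absorbed). The only delicate point in the argument is this simultaneous calibration of $n$ and $k$ so that the admissibility lower bound $n - 1 \geq d^{2(r-1)}$ and the subdivision-depth condition $(r-1) \cdot 2\log d \geq \log n$ both hold; this is a careful matching of two inequalities rather than a genuine obstacle.
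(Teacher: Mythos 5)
Your construction and two of the three verifications are exactly the paper's: subdivide every edge of $K_n$ exactly $r-1$ times, rule out $K_{2,2}$ via girth, and lower-bound admissibility by the $n-1$ internally disjoint length-$r$ paths from the $L$-largest branch vertex to the other branch vertices. The one place you diverge is the calibration of $n$, and as written that step does not close. You require $n-1\geq d^{2(r-1)}$, i.e.\ $n\geq d^{2(r-1)}+1$, while \Cref{lem:subdcliq} applied with $c=2\log d$ needs the subdivision number to satisfy $r-1\geq \log n/c$, i.e.\ $n\leq d^{2(r-1)}$; these two inequalities are mutually exclusive, so the ``careful matching'' you defer to the end is not a formality, it actually fails. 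The proposed repair (monotonicity of $f$ absorbs the slack) goes in the wrong direction: enlarging the parameter to $c'=\log n/(r-1)>2\log d$ only yields $\tww(G)\leq f(c')$, and monotonicity gives $f(c')\geq f(2\log d)$, which is weaker than the bound $f(2\log d)$ asserted in the lemma.

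The paper resolves this by taking $n=d^{2(r-1)}$ exactly, so that $r-1=\log n/(2\log d)$ and \Cref{lem:subdcliq} applies verbatim with $c=2\log d$, with no slack to absorb; the concession is then made on the admissibility side, where the $n-1$ disjoint paths give $b_r(L,v)\geq n-1=d^{2(r-1)}-1$ (the paper in fact writes $\geq n$, a harmless off-by-one, since what matters is the exponential growth in $r$). So the right fix for your write-up is to set $n=d^{2(r-1)}$, keep your (correct) path argument, and accept the loss of one unit in the admissibility bound rather than trying to buy that unit at the cost of weakening the twin-width bound, which is the quantity the lemma pins to $f(2\log d)$ exactly. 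Everything else in your argument (girth $3r$, hence no $C_4$ and no $K_{2,2}$; exact length-$r$ paths; disjointness; the choice of the last branch vertex) matches the paper and is correct.
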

In particular, the above lemma implies that for every $d$, there is a graph class of bounded twin-width, whose members contain no $K_{2,2}$, but which has $r$-admissibility (and thus $r$-weak and strong coloring numbers) at least $d^{2(r-1)}$.
\begin{proof}
Let $d,r \in \N$ and $n = d^{2(r-1)}$. We define $G^d_r$ as the graph obtained by subdividing $r-1$ times each edge of $K_n$.
By construction, $G^d_r$ has no $K_{2,2}$ subgraph.
Let $c = 2 \log d$ and notice that $r-1 = \frac{\log n}{c}$.
According to \Cref{lem:subdcliq}, $G^d_r$ has twin-width at most $f(c)$.
On the other hand, as $r > 3$ we have
\begin{align*}
    r-1 & < 2(r-1) - 2\\
        & < \log_{d}\left ( d^{2(r-1)}\right ) - 2\\
        & < \log_{d}\left ( n - 1\right ) - 1.
\end{align*}

In order to prove the bound on admissibility, let us now consider an arbitrary ordering $\sigma$ of $V(G^d_r)$. Notice that $G^d_r$ has two types of vertices: $n$ vertices of degree $n-1$, which correspond to the vertices of the $n$-clique that was used to construct $G^d_r$, and vertices of degree 2, which have been introduced by subdivisions. Let $x$ denote the vertex of degree $n-1$ that appears the latest in $\sigma$. Notice that there are $n-1$ paths of length $r$ that start in $x$ and are otherwise disjoint and end at the $n-1$ other vertices of degree $n-1$ of $G$. By definition of $x$, all these vertices appear before in the ordering. This implies $\adm_r(G^d_r, \sigma) \geq n = d^{2(r-1)}$. As $\sigma$ was chosen arbitrarily, the same bound holds for the $r$-admissibility of $G^d_r$.
\end{proof}

\begin{corollary}
    For all integers $d$, $r\geq 4$, there is a constant $\varepsilon>0$ such that for all positive integers $r$ and $n$ there is a    
    $K_{s+1,s+1}$-free graph $G$ with 	$|G|\geq n$, $\bomega(G)=s$,  
   and  
  \[\adm_r(G)\geq (\log\log \tww(G))^\varepsilon\,\bomega(G).\]
    
\end{corollary}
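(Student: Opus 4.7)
The plan is to pass from the $K_{2,2}$-free construction of \Cref{lem:lowbo2} to a $K_{s+1,s+1}$-free construction by taking the lexicographic product with $K_s$, in the same spirit as the reduction used in \Cref{cor:lowerboundscol} for strong coloring numbers. I would fix $r \geq 4$ and, given $n$ and $s$, choose $d$ sufficiently large, let $H := G^d_r$ be the graph produced by \Cref{lem:lowbo2} (the $(r-1)$-subdivision of $K_{d^{2(r-1)}}$), and define $G := H[K_s]$, so that $|G| = s|V(H)| \geq n$.

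The verification would split into three claims about $G$. First, $\bomega(G) = s$ and $G$ is $K_{s+1,s+1}$-free: any edge of $H$ already spans an induced $K_{2s}$ in $G$ containing $K_{s,s}$, while a hypothetical $K_{s+1,s+1}$ would have to use at least two blobs on each side (since each blob has only $s$ vertices), and a short case analysis on the overlap of these blobs forces either a $K_{2,2}$ or a triangle in $H$ --- both impossible, since $H$ is the $(r-1)$-subdivision of a clique with $r\geq 4$. Second, $\tww(G) \leq \tww(H) \leq f(2\log d)$: the $s$ vertices inside each blob are true twins, so contracting them one by one introduces no red edge and collapses $G$ onto a copy of $H$, after which a contraction sequence of $H$ can be applied. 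Third, $\adm_r(G) \geq s(d^{2(r-1)}-1)$: for any ordering $\sigma$ of $V(G)$, take as pivot the $\sigma$-latest vertex $x$ lying in a big blob; for each of the $d^{2(r-1)}-1$ other big blobs $V$, the unique length-$r$ path in $H$ from $x$'s blob to $V$ lifts to $s$ length-$r$ paths in $G$ by using, for the $a$-th lift, the $a$-th vertex in each of the $r-1$ shared internal subdivision-blobs and the $a$-th vertex of $V$ as endpoint.

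To close the argument, since $f$ is triply exponential, the bound $\tww(G) \leq f(2\log d)$ translates into $d \geq (\log\log \tww(G))^{\Omega(1)}$ for $d$ large. Combining this with the admissibility lower bound and $\bomega(G) = s$ yields $\adm_r(G) \geq (\log\log \tww(G))^\varepsilon \cdot \bomega(G)$ for a constant $\varepsilon > 0$ depending only on $r$ (and the implicit constant inside the triple exponential $f$). Choosing $d$ large enough in terms of $n$ and $s$ ensures that $|G| \geq n$, giving the statement.

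The main obstacle I expect is the admissibility lifting: the $s$ lifts of a single $H$-path all traverse the same $r-1$ internal subdivision-blobs, so internal vertex-disjointness must be arranged by spreading the lifts across the $s$ vertices of each such blob, which is possible precisely because each blob has size $s$. Disjointness between lifts with different target blobs is automatic, since the corresponding paths in $H$ are internally vertex-disjoint and their projections never share an internal blob.
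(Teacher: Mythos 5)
Your proposal is correct, but it takes a noticeably different route from the paper. The paper first settles the case $s=1$: it keeps the parameter $d$ of \Cref{lem:lowbo2} \emph{fixed} and achieves $|G|\geq n$ by taking the disjoint union of $n$ copies of $G_0$, using that twin-width and $r$-admissibility behave monotonically under disjoint union (so $d\leq \tww(G)\leq f(2\log d)$ stays constant as $n$ grows, and $\varepsilon$ may depend on $d$ and $r$); the case $s>1$ is then dispatched in one line by the lexicographic product with $K_s$, exactly as in \Cref{cor:lowerboundscol}, with none of the product verifications spelled out. You instead let $d$ grow with $n$ so that a single product $H[K_s]$ is already large enough, and you supply the details the paper omits: the case analysis showing $H[K_s]$ is $K_{s+1,s+1}$-free with $\bomega=s$ (reducing a hypothetical biclique to a $K_{2,2}$ or a triangle in the subdivision $H$), the true-twin contractions giving $\tww(H[K_s])\leq\tww(H)\leq f(2\log d)$, and the lifting of the $d^{2(r-1)}-1$ internally disjoint $H$-paths to $s$ disjoint lifts each, which is the right way to get $\adm_r\geq s(d^{2(r-1)}-1)$ and is exactly the point needing care. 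Your closing step is also sound: since $f$ is triply exponential, $\log\log\tww(G)\leq\mathrm{poly}(d)$, so a uniform $\varepsilon$ (depending only on $r$ and the constants of $f$, not on $d$) works once $d$ is large, and choosing $d$ large also gives $|G|\geq n$. The trade-off is that your family has twin-width growing with $n$, whereas the paper's disjoint-union construction yields, for each fixed $d$, a class of \emph{bounded} twin-width witnessing the bound (the flavor emphasized after \Cref{lem:lowbo2}); conversely, your argument yields an $\varepsilon$ independent of $d$ and makes explicit the product verifications that the paper leaves to the reader.
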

\begin{proof}
	We first consider the case where $s=1$.
     Let $G_0$ be the graph given by \Cref{lem:lowbo2}.
     If $|G_0| \geq n$ then $G_0$ is the desired graph.
     Otherwise, we denote by $G$ the disjoint union of $n$ copies of $G_0$. Clearly this does not create any $K_{2,2}$ thus $\bomega(G_0)=1$. We have $\adm_r(G)\geq d^{2(r-1)}$, as otherwise any ordering of $G$ with smaller $r$-admissibility would give an ordering with smaller $r$-admissibility for $G_0$. Finally, as the twin-width of the disjoint union of two graphs is the maximum of the twin-width of each of them, we have  $\tww(G) = \tww(G_0)$, so $d \leq \tww(G)\leq f(2 \log d)$.
     The existence of the constant $C$ then follows from the fact that $f$ is a triple exponential function.
     
    The case where $s>1$ then follows by considering the lexicographic product of the graphs obtained above by $K_{s}$.
\end{proof}

\section*{Acknowledgments}
These results have been obtained at the occasion of the workshop ``Generalized coloring numbers and friends'' of the Sparse Graphs Coalition, which was co-organized by Micha{\l} Pilipczuk and Piotr Micek. 

\providecommand{\bysame}{\leavevmode\hbox to3em{\hrulefill}\thinspace}
\providecommand{\MR}{\relax\ifhmode\unskip\space\fi MR }
\providecommand{\MRhref}[2]{%
  \href{http://www.ams.org/mathscinet-getitem?mr=#1}{#2}
}
\providecommand{\href}[2]{#2}

\end{document}